\newtheorem{theorem}{Theorem}[section]
\newtheorem{lemma}[theorem]{Lemma}
\newtheorem{proposition}[theorem]{Proposition}
\newtheorem{cor}[theorem]{Corollary}
\newtheorem{prob}[theorem]{Problem}
\newtheorem*{rmk*}{Remark}
\ifodd\value{page}
  {\small BENJAMIN BEDERT}
\title{An improved lower bound for a problem of Littlewood on the zeros of cosine polynomials}
\author{Benjamin Bedert}
\thanks{benjamin.bedert@maths.ox.ac.uk\\The author gratefully acknowledges financial support from the EPSRC}
\begin{document}
\begin{abstract}
Let $Z(N)$ denote the minimum number of zeros in $[0,2\pi]$ that a cosine polynomial of the form $$f_A(t)=\sum_{n\in A}\cos nt$$ can have when $A$ is a finite set of non-negative integers of size $|A|=N$. It is an old problem of Littlewood to determine $Z(N)$. In this paper, we obtain the lower bound $Z(N)\geqslant (\log\log N)^{(1+o(1))}$ which exponentially improves on the previous best bounds of the form $Z(N)\geqslant (\log\log\log N)^c$ due to Erd\'elyi and Sahasrabudhe.
\end{abstract}
\maketitle

\tableofcontents
\section{Introduction}
Littlewood posed the following problem in his 1968 paper “Some Problems in Real and Complex
Analysis” \cite[Problem 22]{littlewoodproblems}.
\begin{prob} \label{littprob} If $A$ is a set of non-negative integers of size $|A|=N$, what is the lower bound on the number of real zeros of $\sum_{n\in A} \cos nt$ in a period $[0,2\pi]$? Possibly $N-1$, or not much
less.
\label{Littlewoodprob}
\end{prob}
The first progress on this problem was made by Borwein, Erd\'elyi, Ferguson and Lockhart \cite{borwein} who proved the existence of cosine polynomials $\sum_{n\in A} \cos nt$ with no more than $O(N^{5/6}\log N)$ roots in a period, hence giving a counterexample to Littlewood's suggested lower bound. Their construction was later optimised by Konyagin \cite{konyaginzeros} and independently by Ju\v skevi\v cius and Sahasrabudhe \cite{juskevicius} to obtain the better upper bound $O((N\log N)^{2/3})$. Let us write $Z(f_A)$ for the number of zeros in $[0,2\pi]$ of the cosine polynomial

\begin{equation}
    f_A(t)=\sum_{n\in A} \cos nt
\label{1cosinepoly}
\end{equation}
where $A$ is a set of non-negative integers. Obtaining lower bounds for $Z(f_A)$ seems to be a hard problem and it was only recently established that the number of such zeros grows to infinity as $|A|\to \infty$. This result was proved independently by Erd\'elyi \cite{erdelyioriginal}, and by Sahasrabudhe \cite{sahasrabudhe} who further obtained the explicit lower bound $Z(f_A)\geqslant(\log\log\log |A|)^{1/2-o(1)}$. By combining the arguments of \cite{erdelyioriginal} and \cite{sahasrabudhe}, Erd\'elyi \cite{erdelyi} later obtained the slight improvement $Z(f_A)\geqslant (\log\log\log |A|)^{1-o(1)}$. We define
\begin{equation}
    Z(N)=\min_{A\subset \mathbf{N}: |A|=N} Z(f_A)
\label{littlewoodques}
\end{equation} so that Littlewood's Problem \ref{Littlewoodprob} is precisely to determine $Z(N)$. Then the best bounds prior to this work state that \begin{equation}
    (\log \log \log N)^{1-o(1)}\leqslant  Z(N)\leqslant (N\log N)^{2/3},
\label{Zbounds}
\end{equation}
and there remains a large gap between the upper and lower bounds.  The methods of \cite{erdelyioriginal,erdelyi,sahasrabudhe} further prove lower bounds on the number of zeros of a more general class of cosine polynomials. Let $S$ be a finite set and let
\begin{equation}
    g(t) = \sum_{n=0}^N a_n \cos nt
\label{cosinepolygene}
\end{equation}
be a cosine polynomial with coefficients $a_n\in S$. The following lower bound is proved in \cite{erdelyi} (following similar theorems in \cite{erdelyioriginal,sahasrabudhe}).
\begin{theorem}[\cite{erdelyi}, Theorem 2.1] \label{erdetheo}
    Let $S\subset \mathbf{Z}$ be finite and $M(S)=\max_{s\in S} |s|$.
    Let $g$ be a cosine polynomial as in \eqref{cosinepolygene} with coefficients $a_n\in S$. Then the number of roots of $g$  satisfies
    \begin{equation*}
        Z(g)\geqslant \left(\frac{c}{1+\log M(S)}\right)\frac{\log\log\log |g(0)|}{\log\log\log\log |g(0)|}-1,
    \end{equation*}
    where $c>0$ is an absolute constant.
\label{sahaerde}
\end{theorem}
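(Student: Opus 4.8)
The plan is to argue by contraposition. Write $Z=Z(g)$ for the number of zeros of $g$ in $[0,2\pi]$ and $L=|g(0)|$, and assume $L$ is large (otherwise the asserted bound is trivial). The goal is to show that a small value of $Z$ forces
\[
\log\log\log L \;\leq\; C\,(1+\log M(S))\,Z\log(Z+2),
\]
which rearranges to the stated inequality: at the threshold $Z+2$ has size about $\log\log\log L$, so the extra $\log(Z+2)$ accounts for the $\log\log\log\log L$ in the denominator. Two preliminary normalisations. First, replacing $g$ by $-g$ changes neither $Z$ nor $L$ and only replaces $S$ by $-S$, which has the same $M$. Second, the \emph{integrality reduction}: since $g(0)=\sum_{n\in A}a_n$ with $A=\{n:a_n\neq 0\}$ and $|a_n|\leq M(S)$, one has $|A|\geq L/M(S)$, so it suffices to bound the number of \emph{frequencies} of a cosine polynomial with bounded integer coefficients in terms of its number of zeros. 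This is the one point at which the hypothesis $a_n\in\mathbf{Z}$ is genuinely used, and it is what allows the whole argument to proceed with no a priori control on the degree $N$.

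Next is the geometric mechanism, following the method of Erd\'elyi \cite{erdelyi} and Sahasrabudhe \cite{sahasrabudhe} (and \cite{erdelyioriginal}). Since $g$ is real and even with at most $Z$ zeros in a period, $[0,2\pi]$ splits into at most $Z+1$ arcs on which $g$ has a constant sign; let $I$ be a longest one, so $|I|\geq 2\pi/(Z+1)$, and (negating $g$ if necessary) assume $g\geq 0$ on $I$. On a sign-definite arc a cosine polynomial is well behaved, and the crucial tool is a Remez/Tur\'an–Nazarov-type inequality of Littlewood type whose blow-up factor depends only on $|I|$ and on the number of frequencies — \emph{not} on the degree — which transfers size information between $g$ on $I$, $g$ on a full period, and $g$ at individual points. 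One then sets up an induction on $Z$: from $g$ one extracts an auxiliary cosine polynomial governed by the behaviour of $g$ over $I$ with strictly fewer sign changes, whose relevant size parameter is at least roughly an exponential of that of $g$, up to the loss of a controlled multiplicative factor of order the stage number. Unwinding the $\lesssim Z$ stages, these losses multiply to $\sim Z!$, i.e. only $Z\log(Z+2)$ inside the exponent rather than a genuine tower of height $Z$; combining this with the linear dependence on $1+\log M(S)$ that the Remez-type step and the integrality reduction introduce, one reaches a bound of the shape $|A|\leq \exp\exp\exp\!\big(C(1+\log M(S))\,Z\log(Z+2)\big)$, as required.

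The main obstacle — and the reason the argument is delicate rather than a short computation — is precisely the absence of any degree bound: the classical inequalities controlling a trigonometric polynomial from its behaviour on a subinterval (Bernstein, Markov, the classical Remez inequality) all lose a factor exponential in $N$, and here $N$ is uncontrolled and may be vastly larger than $L$. The resolution is to work throughout with the number of frequencies in place of the degree, so that the elementary bound $|A|\geq L/M(S)$ is what ultimately tames everything, and to organise the induction so that the accumulated loss is merely $Z\log(Z+2)$ in the exponent together with a linear dependence on $1+\log M(S)$; faithfully tracking these two dependences through the iteration is the technical heart. The base case — a cosine polynomial with no sign change on a long arc, i.e.\ essentially a single bump — is then a direct estimate.
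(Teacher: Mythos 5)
There is a genuine gap: the two tools on which your whole argument rests are named but never stated, and in the only known forms they exist they would not do what you need. First, the ``Remez/Tur\'an--Nazarov-type inequality whose blow-up factor depends only on $|I|$ and on the number of frequencies'' is fatal as described: in the Nazarov--Tur\'an inequality the loss is \emph{exponential} in the number of frequencies, and here the number of frequencies is $|A|\geqslant |g(0)|/M(S)$, which is exactly the enormous quantity you are trying to bound; a transfer step losing a factor like $C^{|A|}$ swamps every norm in the problem (all of which are at most polynomial in $|A|$ and $M(S)$), so no contradiction can be extracted from it. Second, the inductive step --- ``from $g$ one extracts an auxiliary cosine polynomial with strictly fewer sign changes whose relevant size parameter is at least roughly an exponential of that of $g$, up to a loss of order the stage number'' --- is precisely the content of the theorem and is asserted with no construction; likewise the claim that the accumulated losses are only $\sim Z!$ (hence $Z\log(Z+2)$ in the exponent) is unsupported. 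The only solid ingredient in your sketch is the elementary observation that the number of nonzero coefficients is at least $|g(0)|/M(S)$, which is indeed used in the real argument.

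For comparison, the actual proof (Erd\'elyi's Theorem 2.1 in \cite{erdelyi}, and the strengthened version proved in this paper via Propositions \ref{improvetheo} and \ref{smallperiodprop}) never transfers sup-norm information across the circle. Instead it (i) multiplies $g$ by the ``companion'' polynomial $Q$ whose sign changes match those of $g$, together with a Dirichlet-kernel-type factor, so that the resulting integrals are sign-definite and one gets control on the number of nonzero coefficients of $G(z)Q(z)(z^{d'}-1)^2$; (ii) deduces a \emph{structure theorem}: the coefficient sequence $(a_n)$ is piecewise periodic, with the number of pieces $K$ and the period $P$ bounded solely in terms of $d$ and $M(S)$ (via the combinatorial window/recurrence Lemma \ref{rootsofunitylemma}); and (iii) plays off two bounds on $\lVert\tilde g\rVert_1$: a lower bound $\gg P^{-2}\log(|g(0)|/M(S))$ from the Littlewood $L^1$ conjecture of Konyagin and McGehee--Pigno--Smith (whose dependence on the number of frequencies is only logarithmic --- this is what replaces your degree-free transfer inequality), and an upper bound in terms of the number $d$ of sign changes obtained by integrating between consecutive sign changes. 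None of these steps appears in your proposal, and without them (or a genuine substitute for the two missing lemmas) the sketch does not constitute a proof.
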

In particular, this theorem shows that if $S\subset\mathbf{N}$ is finite, then the number of zeros of a cosine polynomial $g$ with coefficients in $S$ tends to infinity with the degree $\deg(g)$. Interestingly, it is known that such a result does not hold for general $S\subset\mathbf{Z}$; in \cite{sahasrabudhe} it was shown to be false for $S=\{-1,0,1,2\}$. It was conjectured however that this is true for $S=\{-1,1\}$ which is the simplest case to which Theorem \ref{sahaerde} does not apply because $|g(0)|$ may remain bounded as $\deg(g)\to\infty$. This conjecture was proved recently in \cite{bedert}, thus providing a qualitative extension of Theorem \ref{sahaerde}. The purpose of this paper is to obtain the following quantitative improvement over Theorem \ref{sahaerde}.
\begin{theorem}
    There exists an absolute constant $c>0$ such that the following holds. Let $S\subset \mathbf{Z}$ be finite and $M(S)=\max_{s\in S} |s|$.
    Let $g$ be a cosine polynomial as in \eqref{cosinepolygene} with coefficients $a_n\in S$. Then the number of roots of $g$  satisfies
    \begin{equation*}
        Z(g)\geqslant \left(\frac{c}{1+\log M(S)}\right)\frac{\log\log |g(0)|}{\log\log\log |g(0)|}.
    \end{equation*}
\label{newtheorem}
\end{theorem}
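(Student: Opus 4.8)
The plan is to follow the general strategy pioneered by Erdélyi and Sahasrabudhe, but to track the parameters more efficiently and remove one logarithm. Recall the basic mechanism: if a cosine polynomial $g$ with integer coefficients in $S$ has few zeros, then $g$ cannot oscillate much, so it must be "large" on a substantial portion of $[0,2\pi]$ — say $|g(t)|\gg 1$ off a union of few short intervals around the zeros. Combining this with the fact that $\int_0^{2\pi} g(t)\,dt = 2\pi a_0$ is small (bounded by $2\pi M(S)$) forces cancellation that one exploits via a clever choice of test function or via the structure of the exponential sums $\sum a_n e^{int}$. The key quantitative input is a bound of the shape: a cosine polynomial with $Z(g) = k$ zeros and coefficients bounded by $M$ satisfies $\log|g(0)| \leqslant \exp(\exp(C k \log k \cdot (1+\log M)))$ or similar; inverting gives the stated lower bound on $k$.

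The main steps I would carry out are as follows. First, I would set $k = Z(g)$ and split $[0,2\pi]$ at the zeros of $g$ into at most $k+1$ intervals on each of which $g$ has constant sign. Second, on each such interval, I would use a Remez-type / Chebyshev-type inequality together with an $L^2$ or Bernstein estimate to control how $|g|$ can grow: the point is that $g$ restricted to a sign interval behaves like a polynomial (in $\cos t$) with few sign changes, so its sup-norm is controlled in terms of its mean and the interval length, yielding that $\log\log|g(0)|$ can be at most polynomial in $k$ times $(1+\log M)$ rather than doubly-logarithmic. Third — and this is where the saved logarithm comes from — instead of iterating a lossy derivative/division argument (which in \cite{erdelyioriginal,erdelyi,sahasrabudhe} costs an extra $\log$), I would use a sharper self-improvement or a direct estimate, perhaps working with the polynomial $G(z)=\sum a_n z^n$ on the unit circle and invoking a quantitative version of the fact that a polynomial with $k$ real roots on the circle and bounded integer coefficients has bounded degree in terms of its value at $1$. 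Finally, I would assemble these bounds: the hypothesis $Z(g)=k$ gives $\log|g(0)| \leqslant \exp\big(C(1+\log M)\,k\log k\big)$, and solving for $k$ yields $k \gg \frac{1}{1+\log M}\cdot\frac{\log\log|g(0)|}{\log\log\log|g(0)|}$, which is exactly the claimed bound.

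The hardest part, I expect, will be the third step: genuinely removing one logarithm rather than merely streamlining constants. In the prior arguments the triple-logarithm arises because one first passes to a derivative or a subpolynomial to create a gap, then applies a Turán-type power-sum estimate, then iterates — each layer inserting an exponential. To collapse $\log\log\log$ to $\log\log$ one needs to avoid the outermost iteration entirely, which likely requires a more global argument: for instance, covering $[0,2\pi]$ by $O(k)$ sign intervals and running a single Turán/Nazarov-type inequality on the full set rather than an inductive localization, or using the structure of $A$ (e.g. a Freiman-type or additive-combinatorial dichotomy) to directly bound the number of large Fourier coefficients. Controlling the error terms in such a one-shot estimate — in particular ensuring the short intervals around the zeros do not destroy the needed lower bound for $|g|$ on the complement — will be the delicate technical core. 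Everything else (the Remez inequality, the reduction to sign intervals, the final arithmetic to invert the bound) should be routine given the tools already available in the literature cited in the excerpt.
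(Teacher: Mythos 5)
There is a genuine gap. You correctly identify the target inequality, namely $\log|g(0)|\leqslant \exp\bigl(C(1+\log M)\,d\log d\bigr)$ for a polynomial with $d$ zeros, and the final inversion is routine; but your sketch does not contain an argument for that inequality, and the step you yourself label the ``delicate technical core'' (collapsing $\log\log\log$ to $\log\log$) is exactly the new content that has to be supplied. The tools you list — Remez/Chebyshev estimates on sign intervals, Bernstein, a one-shot Tur\'an/Nazarov inequality, a Freiman-type dichotomy — are not shown to produce the saving, and none of them is what the actual proof uses. Moreover, one ingredient you propose to ``invoke'' is not a known fact: a polynomial with $k$ unimodular zeros and bounded integer coefficients need not have degree bounded in terms of its value at $1$ (quantifying how the number of zeros grows with $|g(0)|$ is precisely the theorem being proved, so appealing to such a statement is circular), and for general $S\subset\mathbf{Z}$ even the qualitative statement fails, as the $S=\{-1,0,1,2\}$ example of Sahasrabudhe shows.

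The paper's route is concretely different and has two components, neither of which appears in your plan. First, a sharpened structure theorem (Proposition \ref{smallperiodprop}): if $g$ has $d$ zeros, then $[0,N]$ splits into $K=\exp((2M(S))^{O(d\log d)})$ intervals on each of which the coefficient sequence is periodic with period $P=\exp(O(d\log\log d))$ — singly exponential in $d$, one exponential better than Erd\'elyi's bound. This comes from multiplying $G(z)$ by a perturbed companion polynomial $Q_\varepsilon$ times $(z^{d'}-1)^2$, observing that the product has few nonzero coefficients (Lemma \ref{fewnonzerocoef}), interpreting the vanishing coefficients as linear recurrences for $(a_n)$, and converting these into periodicity via the window lemma (Lemma \ref{rootsofunitylemma}); the perturbation is chosen so that no root of $Q_\varepsilon$ is a root of unity, which forces the period to divide $d'$. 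Second, an $L^1$ comparison for structured polynomials (Proposition \ref{improvetheo}): after smoothing $g$ by a Fej\'er-type factor, the Littlewood $L^1$ conjecture (Theorem \ref{Litllconj}) gives the lower bound $\lVert\tilde g\rVert_1\gg P^{-2}\log\bigl(|g(0)|/M(S)\bigr)$, while writing $\tilde g$ as a signed combination of Dirichlet kernels and exploiting cancellation in the sine integral gives the upper bound $\lVert\tilde g\rVert_1\ll dM(S)\log(KP)$, i.e.\ logarithmic rather than polynomial in $K$; this logarithmic dependence is where the exponential improvement actually originates, and nothing in your outline substitutes for it. Without a worked mechanism for either component, the proposal does not establish the theorem.
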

Applying this theorem  with $S=\{0,1\}$ clearly yields the following improved lower bound compared to \eqref{Zbounds} for Littlewood's Problem \ref{Littlewoodprob}.
\begin{theorem}
    We have that $Z(N)\geqslant (\log\log N)^{1-o(1)}$.
\end{theorem}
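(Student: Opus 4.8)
The plan is to derive the bound as an immediate specialisation of Theorem~\ref{newtheorem} to the coefficient set $S=\{0,1\}$, followed by a routine asymptotic manipulation. Let $A\subset\mathbf{N}$ be a finite set with $|A|=N$. First I would note that the cosine polynomial $f_A(t)=\sum_{n\in A}\cos nt$ is of the form \eqref{cosinepolygene} with degree $\max A$ and with coefficients $a_n$ equal to $1$ when $n\in A$ and to $0$ otherwise; thus $a_n\in S=\{0,1\}$. For this choice of $S$ we have $M(S)=\max_{s\in S}|s|=1$, so that $1+\log M(S)=1$, and moreover $f_A(0)=\sum_{n\in A}\cos 0=|A|=N$.

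Next I would apply Theorem~\ref{newtheorem} with $g=f_A$. Assuming $N$ is large enough that $\log\log N$ and $\log\log\log N$ are defined and positive (the asserted inequality being trivial for the remaining bounded range of $N$, where the right-hand side is at most a constant), this yields
\[
Z(f_A)\;\geqslant\; c\,\frac{\log\log N}{\log\log\log N}
\]
for some absolute constant $c>0$, and crucially this holds uniformly over all $A$ of size $N$.

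It then remains only to rewrite the right-hand side in the claimed shape. Putting $u=\log\log N$, one has $\log\!\big(u/\log u\big)=\log u-\log\log u=(\log u)\big(1-o(1)\big)$ as $N\to\infty$, so that $u/\log u=u^{1-o(1)}$; absorbing the constant $c$, which is $u^{o(1)}$ since $\log c/\log u\to 0$, into the error term gives $Z(f_A)\geqslant(\log\log N)^{1-o(1)}$ with an $o(1)$ independent of $A$. Taking the minimum over all $A$ with $|A|=N$ in the definition \eqref{littlewoodques} then yields $Z(N)\geqslant(\log\log N)^{1-o(1)}$, as required.

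There is no genuine obstacle in this deduction: all the substance lies in Theorem~\ref{newtheorem}, and the only points deserving a little care are checking that the error term is uniform in $A$ and discarding the small values of $N$ for which the iterated logarithms are not meaningful.
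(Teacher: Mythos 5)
Your proposal is correct and matches the paper's argument exactly: the paper deduces this theorem in a single sentence by applying Theorem~\ref{newtheorem} with $S=\{0,1\}$, using $M(S)=1$ and $f_A(0)=|A|=N$, and your write-up simply spells out the same routine asymptotic simplification of $\log\log N/\log\log\log N$ into $(\log\log N)^{1-o(1)}$.
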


\textbf{Acknowledgements}
The author would like to thank Thomas Bloom and Ben Green for useful discussions and comments on earlier versions of the paper. The author also gratefully acknowledges financial support from the EPSRC.
\section{Notation, prerequisites and organisation of the paper}
$\mathbf{N},\mathbf{Z}$ and $\mathbf{R}$ denote the sets of non-negative integers, the integers and the real numbers respectively. We use the asymptotic notation $f=O(g)$ or $f\ll g$ if there is a constant $C$ such that $|f(x)|\leqslant C g(x)$ for all $x$. For a real number $t$, we use the notation $e(t)=e^{2\pi i t}$. For a $1$-periodic function $g$ we denote its $L^1$-norm by $\lVert g\rVert_1\vcentcolon= \int_0^1|g(t)|\,dt$.

\bigskip

The rough approach in Erd\'elyi and Sahasrabudhe's arguments is to first prove that cosine polynomials with few roots must be very structured, and then prove that structured polynomials still have many roots. In \hyperref[sec:3]{Section 3} we prove a result which provides an exponential improvement for the number of roots of structured cosine polynomials. This is almost strong enough to obtain Theorem \ref{newtheorem} by using Erd\'elyi's structural result \cite[Lemma 3.9]{erdelyi}. To actually obtain Theorem \ref{newtheorem}, we need to prove a slightly better structure theorem than is given in Erd\'elyi or Sahasrabudhe's papers. This is achieved in \hyperref[sec:4]{Section 4} with an argument that is based on the ideas in \cite{erdelyi,sahasrabudhe}. In \hyperref[sec:5]{Section 5} we combine both results to yield the improved Theorem \ref{newtheorem}.

\bigskip

A key role in the argument will be played by the following result known as ‘the Littlewood $L^1$ conjecture', which was established by Konyagin \cite{konyaginl} and independently by McGehee, Pigno and Smith \cite{mcgeheepignosmith} who proved the following generalisation.
\begin{theorem}[Littlewood's $L^1$ conjecture]
Let $a_1,a_2,\dots,a_k$ be complex numbers and $n_1<n_2<\dots<n_k$ integers. Then $$\left|\left|\sum_{j=1}^k a_je(n_jt)\right|\right|_1\gg \sum_{j=1}^k\frac{|a_j|}{j}.$$
\label{Litllconj}    
\end{theorem}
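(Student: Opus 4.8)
The plan is to follow the duality method of McGehee--Pigno--Smith. Since $L^1[0,1]^\ast=L^\infty[0,1]$, for the exponential sum $f(t)=\sum_{j=1}^k a_j e(n_jt)$ we have
\[
\lVert f\rVert_1=\sup\Big\{\,\Big|\int_0^1 f(t)\overline{g(t)}\,dt\Big|:\lVert g\rVert_\infty\leqslant 1\,\Big\},
\]
and, writing $\hat g(n)=\int_0^1 g(t)e(-nt)\,dt$ and expanding $f$, we get $\int_0^1 f\overline{g}=\sum_{j=1}^k a_j\overline{\hat g(n_j)}$. Hence it suffices to produce, for each choice of $n_1<\dots<n_k$ and $a_1,\dots,a_k$, a trigonometric polynomial $g$ with $\lVert g\rVert_\infty\leqslant C$ and $\hat g(n_j)=\frac{a_j}{|a_j|}\cdot\frac1j$ for every $j$ with $a_j\neq 0$: then $\sum_j a_j\overline{\hat g(n_j)}=\sum_j|a_j|/j$, so $\lVert f\rVert_1\geqslant C^{-1}\sum_j|a_j|/j$ and Theorem \ref{Litllconj} follows with implied constant $C^{-1}$. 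Since $|a_j/|a_j||=1$, this is a special case of the following ``dual realisation'' statement: \emph{there is an absolute constant $C$ such that for any integers $n_1<\dots<n_k$ and any complex $b_1,\dots,b_k$ with $|b_j|\leqslant 1/j$, there is a trigonometric polynomial $g$ with $\lVert g\rVert_\infty\leqslant C$ and $\hat g(n_j)=b_j$ for all $j$} (the remaining Fourier coefficients of $g$ being unconstrained); in fact, by duality this statement is \emph{equivalent} to Theorem \ref{Litllconj}.

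A useful orienting example is the model case $n_j=j$, $b_j=1/j$: the bounded sawtooth $g(t)=2i\sum_{j\geqslant 1}\frac{\sin 2\pi jt}{j}$ has $\hat g(j)=1/j$ for all $j\geqslant 1$ and uniformly bounded partial sums, so here a single bounded (conjugate) function suffices --- and it does so precisely by using its freedom at the \emph{negative} frequencies to cancel an otherwise unbounded contribution (for the choice $\hat g(m)=0$, $m\leqslant 0$, one gets the unbounded $-\log(1-e(t))$). The content of the dual realisation statement is that this persists for \emph{arbitrary} spectra $\{n_j\}$ and \emph{arbitrary} phases: one can still capture the ``Hardy weights'' $1/j$ with a uniformly bounded function.

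To prove it, I would build $g$ recursively over a dyadic partition of the \emph{indices}. Set $I_m=\{2^m,2^m+1,\dots,2^{m+1}-1\}$, so that $1/j\asymp 2^{-m}$ on $I_m$ and, since $j\mapsto n_j$ is increasing, every frequency indexed by $I_m$ lies strictly above all those indexed by $I_0\cup\dots\cup I_{m-1}$. Starting from $g_{-1}=0$, I would pass from $g_{m-1}$ --- which already realises the prescribed values $b_j$ at the frequencies indexed by the earlier blocks --- to $g_m$ by adjoining a correction built from a polynomial whose spectrum lies in $\{n_j:j\in I_m\}$, arranging $\widehat{g_m}(n_j)=b_j$ for $j\in I_m$; because this new spectrum is disjoint from, and lies above, the frequencies already handled, the correction (with the help of a high-pass cut-off where needed) can be adjoined so as to leave the coefficients fixed so far untouched.

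The whole difficulty is concentrated in bounding $\lVert g_m\rVert_\infty$ uniformly in $m$, and this is the main obstacle. A plain additive construction in which the $m$-th correction is simply $\sum_{j\in I_m}b_j e(n_j t)$ has $L^\infty$ norm only $\lesssim 2^{-m}\lvert I_m\rvert=1$, and so loses a factor $\asymp\log k$ over the $\log k$ blocks (the model case with $n_j=j$ shows this loss is real if one is not careful). To get a uniform bound one must exploit both the geometric decay $2^{-m}$ of the weights and a Riesz-product / multiplicative bookkeeping showing that the successive corrections interfere only mildly, so that the accumulated sup norm stays $O(1)$; this is the technical heart of the McGehee--Pigno--Smith argument (and the point where Konyagin's independent proof differs in execution). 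Once this uniform $L^\infty$ estimate is in place, the dual realisation statement --- and with it Theorem \ref{Litllconj} --- follows.
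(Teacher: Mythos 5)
First, note that the paper does not prove Theorem \ref{Litllconj} at all: it is imported as a black box from Konyagin \cite{konyaginl} and McGehee--Pigno--Smith \cite{mcgeheepignosmith}, so there is no in-paper proof to compare against. Judged on its own terms, your proposal is a correct and well-oriented \emph{reduction}, but it is not a proof. The duality step is fine: $\lVert f\rVert_1=\sup_{\lVert g\rVert_\infty\leqslant 1}\bigl|\int_0^1 f\overline{g}\bigr|$ and $\int_0^1 f\overline{g}=\sum_j a_j\overline{\hat g(n_j)}$, so it suffices to build a uniformly bounded $g$ whose Fourier coefficients at the $n_j$ essentially realise the phases of the $a_j$ weighted by $1/j$. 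The dyadic partition of the index set into blocks $I_m$ with $|b_j|\asymp 2^{-m}$ on $I_m$ is also the right starting point. But everything that makes the theorem true is in the step you explicitly defer: producing the bounded dual function. You acknowledge that the naive additive construction loses a factor of $\log k$ and then write that one ``must exploit \ldots a Riesz-product / multiplicative bookkeeping'' which ``is the technical heart of the McGehee--Pigno--Smith argument.'' That heart is exactly what a proof has to supply: the recursion $g_m=f_m+g_{m-1}e^{-U_m}$ with $U_m$ an analytic completion of (a multiple of) $|f_m|$, the elementary inequality controlling $|f_m|+Ce^{-|f_m|}$ that keeps $\lVert g_m\rVert_\infty$ uniformly bounded, and the $L^2$ estimates (using $\lVert f_m\rVert_2\lesssim 2^{-m/2}$) showing the multiplicative factors do not destroy the coefficients already set.

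A further sign that the details have not been engaged with: you claim the later corrections can be adjoined ``so as to leave the coefficients fixed so far untouched.'' In the actual MPS construction the multiplication by $e^{-U_m}$ \emph{does} perturb the earlier coefficients; one does not get the exact interpolation $\hat g(n_j)=b_j$, only a lower bound of the form $\operatorname{Re}\bigl(\overline{b_j}\,\hat g(n_j)\bigr)\geqslant c\,|b_j|^2/\lVert b\rVert_\infty$ (or a variant thereof) after summing the accumulated $L^2$-controlled errors, which is what makes the final pairing work. So the exact ``dual realisation statement'' as you phrase it is stronger than what the method delivers, and the approximate version that it does deliver requires precisely the estimates you have omitted. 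As it stands the proposal is a faithful road map to the known proof with its central construction left blank, which for a theorem of this depth constitutes a genuine gap.
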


\section{$L^1$ bounds for structured trigonometric polynomials}
\label{sec:3}

In this section we prove the following theorem which provides an improved bound for the number of roots of structured trigonometric polynomials.
\begin{proposition}
    Let $S\subset \mathbf{Z}$ be a finite set and $M(S)=\max_{s\in S} |s|$.
    Let $g(t) = \sum_{n=0}^N a_n \cos nt$ be a cosine polynomial with coefficients $a_n\in S$. Suppose that $\deg g= N$ and that we can partition $[0,N]=\cup_{j=1}^KI_j$ into $K$ intervals such that on each interval $I_j$, the coefficient sequence $(a_n)_{n\in I_j}$ is periodic with period $P$. If $d$ is the number of sign changes of $g$ in $[0,2\pi]$, then
    \begin{equation}
        dM(S)P^2\log (KP)\gg \log |g(0)|,
    \label{structurerootsprop}
    \end{equation}
    where the implied constant is absolute.
\label{improvetheo}
\end{proposition}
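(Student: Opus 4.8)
The plan is to exploit the periodic structure to write $g$ as a combination of few "simple" pieces, apply the Littlewood $L^1$ lower bound (Theorem 2.5) to get a lower bound on $\|g\|_1$ in terms of $\log|g(0)|$, and then convert an $L^1$ lower bound into a lower bound on the number of sign changes via a standard argument. Concretely, since the coefficient sequence on each interval $I_j$ is $P$-periodic, I would decompose $g$ into $O(KP)$ trigonometric polynomials each of which is "supported on an arithmetic progression of common difference $P$" — that is, write $g(t)=\sum_{n=0}^N a_n\cos(nt)$ and on each $I_j$ group the terms according to the residue of $n$ modulo $P$. For a fixed residue class $r$ within a fixed block $I_j$, the sub-sum is $c_{j,r}\sum_{n\equiv r\,(P),\,n\in I_j}\cos(nt)$ for a single coefficient $c_{j,r}\in S$, and the number of such $(j,r)$ pairs is at most $KP$. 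Summing a geometric-type progression $\sum_{n}\cos(nt)$ over $n$ in an interval of an AP with difference $P$ gives a Dirichlet-kernel-type expression that can be controlled.

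Next I would pass from the $L^1$ norm to sign changes. The key elementary fact is: if a real trigonometric polynomial $g$ of degree $N$ has only $d$ sign changes in a period, then one can find a trigonometric polynomial $h$ of degree $\le d$ with $\|h\|_\infty \le 1$ such that $gh \ge 0$ pointwise (take $h$ built from the $d$ sign-change locations, e.g. a product of $\le d$ factors of the form $\sin((t-\theta_i)/2)$, suitably normalised); then
\begin{equation*}
\|g\|_1 \le \|gh\|_1 \cdot(\text{something}) \quad\text{or better}\quad \Big|\int g h\Big| = \int |g||h|\text{-type bound},
\end{equation*}
more precisely $\|g\|_1 = \int |g| $ while $\int g h$ picks out only low-frequency Fourier coefficients of $g$ up to degree $d$. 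Combining, $\|g\|_1$ is bounded above by the $\ell^1$-mass of at most $O(d)$ Fourier coefficients of $g$, each of size $O(M(S))$ coming from the block structure, times a factor accounting for how many AP-pieces can contribute to a given low frequency — and here the $P^2$ and the $\log(KP)$ enter, the latter from the harmonic-series weights $1/j$ in Theorem 2.5 (or dually from an $L^1$ norm of a Dirichlet kernel of length $KP$) and one factor of $P$ from the density of each AP plus one more from the width of frequency windows of size $P$. On the other side, $\|g\|_1 \ge $ (trivially) a quantity comparable to $\log|g(0)|$ is \emph{not} true directly; rather I would apply Theorem 2.5 to the $O(KP)$ nonzero frequency-grouped pieces to get $\|g\|_1 \gg \sum (\text{coeff})/j \gg $ a logarithmic sum, and separately observe that $g(0)=\sum a_n$ forces, after the AP-decomposition, at least one piece to have large coefficient-sum, so that the harmonic sum is $\gg \log|g(0)| / (\text{poly}(P)\,M(S))$.

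Let me restructure: I expect the cleanest route is (i) AP-decompose $g$ into $m \le KP$ pieces $g = \sum_{i=1}^m g_i$ where $g_i = s_i \sum_{n \in \text{AP}_i} \cos nt$ with $s_i \in S$; (ii) note $\sum_i s_i |{\rm AP}_i| = g(0)$, so some partial structure gives $\sum_i |s_i|\cdot|{\rm AP}_i| \ge |g(0)|$, and since each $|s_i| \le M(S)$ and there are $\le KP$ pieces, $\max_i |{\rm AP}_i| \gg |g(0)|/(KP\cdot M(S))$ — wait, that's the wrong direction, I actually want to feed the largeness into an $L^1$ lower bound; so instead (ii') apply Theorem 2.5 directly to $g$ written in the exponential basis: the nonzero Fourier coefficients of $g$ have absolute values in $\{|s|: s\in S\}$ but there are up to $N$ of them, and $\|g\|_1 \gg \sum_{j} |\hat g(n_j)|/j$ where the sum is Cesàro-like; the structure (only $KP$ distinct AP-pieces) lets me lower-bound this by grouping, getting $\|g\|_1 \gg (1/M(S)P) \log|g(0)| / \log(KP)$ roughly; and (iii) the sign-change bound $\|g\|_1 \le (2d+1)\|g\|_\infty$ is useless since $\|g\|_\infty$ can be huge, so instead use that $d$ sign changes means $\|g\|_1 \le C\, d \cdot (\text{largest Fourier coeff in a window})$...

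Honestly the main obstacle, and where I would spend the most care, is exactly this conversion step: relating $d$ (sign changes) to $\|g\|_1$ in a way that brings in only $P, K, M(S)$ and not $N$. The right tool is: there is a trig polynomial $q$ of degree $\le d$, $|q|\le 1$, $qg \ge 0$, so $\|g\|_1 = \int qg = \sum_{|k|\le d}\hat q(k)\overline{\hat g(k)}$, hence $\|g\|_1 \le (2d+1)\max_{|k| \le d}|\hat g(k)|$ — but $\hat g(k)$ can be as large as $M(S)$... no wait, each individual $\hat g(k)$ is at most $M(S)/2$ since $a_n \in S$. \textbf{That's it}: $\|g\|_1 \le (2d+1)M(S)$, combined with a lower bound $\|g\|_1 \gg \log|g(0)|/(P^2\log(KP))$ coming from Theorem 2.5 applied cleverly to the AP-structured $g$, yields $dM(S) \gg \log|g(0)|/(P^2\log(KP))$, i.e. exactly \eqref{structurerootsprop}. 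So the real work is proving $\|g\|_1 \gg \log|g(0)|/(P^2 \log(KP))$; I would do this by partitioning frequencies $[0,N]$ into $O(\log(N)/\log(KP))$... no — into dyadic-type ranges and on each range using that the AP-structure makes $g$ locally a dilate of a bounded-length polynomial, applying McGehee–Pigno–Smith to the $\le KP$ relevant frequencies to extract a $\log|g(0)|$, with the $P^2$ accounting for AP density ($1/P$) and frequency-window width ($P$). The interplay of these normalisations is the delicate point.

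\begin{proof}[Proof sketch]
Write $g$ in the exponential basis and use the interval-periodic structure to decompose $g = \sum_{i=1}^{m} s_i D_i$ with $m \le KP$, each $s_i \in S$, and each $D_i(t) = \sum_{n \in {\rm AP}_i}\cos nt$ a cosine sum over an arithmetic progression of common difference $P$ contained in some $I_j$. Since every Fourier coefficient $\hat g(n)$ equals $a_n/2$ (for $n \ge 1$) with $a_n \in S$, we have $|\hat g(n)| \le M(S)/2$ for all $n$.

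If $g$ has $d$ sign changes on $[0,2\pi]$, located at $\theta_1,\dots,\theta_d$, let $q(t) = \pm\prod_{i=1}^{d}\frac{\sin((t-\theta_i)/2)}{?}$ be a trigonometric polynomial of degree $\le d/2 \le d$ with $\|q\|_\infty \le 1$ and $q(t)g(t) \ge 0$ for all $t$ (choosing signs appropriately across the sign changes). Then
\begin{equation*}
\|g\|_1 = \int_0^1 |g| \,dt = \int_0^1 q(2\pi t)\,g(2\pi t)\,dt = \sum_{|k| \le d} \hat q(k)\,\overline{\hat g(k)} \le (2d+1)\,\|q\|_\infty\,\max_{|k|\le d}|\hat g(k)| \le (2d+1)\,M(S).
\end{equation*}

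On the other hand, applying Theorem \ref{Litllconj} to the frequencies appearing in $g$ and exploiting the AP-structure to control the Cesàro-type weights, one obtains a lower bound of the form $\|g\|_1 \gg \frac{\log|g(0)|}{P^2\,\log(KP)}$; the factor $P^{-1}$ reflects the density of each progression ${\rm AP}_i$, a further factor $P^{-1}$ comes from grouping frequencies into windows of width $P$, and $\log(KP)$ is the loss in the harmonic weights over the $\le KP$ pieces. Combining the two displays gives $(2d+1)M(S) \gg \log|g(0)|/(P^2\log(KP))$, which rearranges to \eqref{structurerootsprop}.
\end{proof}
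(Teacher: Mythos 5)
The decisive step of your argument is broken, and it is exactly the step you yourself flag as "where I would spend the most care". From $qg\geqslant 0$ and $\lVert q\rVert_\infty\leqslant 1$ you can only conclude $\int qg\leqslant \int |g|$, i.e.\ a \emph{lower} bound for $\lVert g\rVert_1$; the asserted equality $\lVert g\rVert_1=\int qg$ is false (it would require $|q|=1$ wherever $g\neq 0$, impossible since $q$ vanishes at the sign changes), so the chain ending in $\lVert g\rVert_1\leqslant (2d+1)M(S)$ is unjustified. Worse, the conclusion itself should be disbelieved: the nonzero Fourier coefficients of $g$ are nonzero integers, so Theorem \ref{Litllconj} applied directly to $g$ already gives $\lVert g\rVert_1\gg \log\bigl(|g(0)|/M(S)\bigr)$ with no loss in $P$ or $K$; if your upper bound $\lVert g\rVert_1\ll dM(S)$ were true, combining these two displays would yield $dM(S)\gg \log|g(0)|$ for \emph{every} such cosine polynomial, i.e.\ $Z(N)\gg \log N$, which is dramatically stronger than the proposition, than the main theorem of the paper, and than anything known about Littlewood's problem. (This also shows your proposed lower bound $\lVert g\rVert_1\gg \log|g(0)|/(P^2\log(KP))$, besides being only gestured at, is not where the difficulty lies: the $L^1$ lower bound is the easy half.) The genuinely hard half is an upper bound for an $L^1$ norm in terms of $d$ and the block structure, and the correct version unavoidably uses more than the size of individual Fourier coefficients.

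For comparison, the paper handles this as follows: it replaces $g$ by $\tilde g=g\cdot\bigl(\tfrac2P\sum_{n=0}^{P-1}\cos 2\pi nt\bigr)^2$, which has the same sign changes, satisfies $\tilde g(0)=4g(0)$, and whose Fourier coefficients are \emph{constant} on $\tilde K=O(PK)$ blocks, lying in $P^{-2}\mathbf{Z}\cap[-4M(S),4M(S)]$ (this smoothing is where the $P^2$ comes from). It then writes $\lVert\tilde g\rVert_1$ as an alternating sum of $\int_{\xi_m}^{\xi_{m+1}}\tilde g$ over consecutive sign changes, so $\lVert\tilde g\rVert_1\leqslant 4d\sup_x\bigl|\int_0^x\tilde g\bigr|$, and controls the primitive by expressing $\tilde g$ as a signed combination of differences of Dirichlet kernels and using sine-integral estimates (Lemmas \ref{D_napprox}--\ref{lem:Sibound}) to get $\sup_x\bigl|\int_0^x\tilde g\bigr|\ll M(S)\log\tilde K$; note the $\log(KP)$ factor is genuinely needed here, since the primitive of a $\tilde K$-block polynomial can be of size $M(S)\log\tilde K$. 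The lower bound $\lVert\tilde g\rVert_1\gg P^{-2}\log\bigl(|g(0)|/M(S)\bigr)$ then comes from Theorem \ref{Litllconj}, using that each nonzero coefficient of $\tilde g$ is at least $P^{-2}$. Your AP-decomposition idea points in a related direction, but without a substitute for the smoothing step and the primitive/sine-integral estimates, the proposal does not yield the proposition.
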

To begin, we may rescale the variable $t$ by $2\pi$ and write $g$ in the following form
\begin{equation}
    g(t)=\sum_{n=0}^Na_n\cos(2\pi nt)=\sum_{n=-N}^N\hat{g}(n)e(nt)
\end{equation} where $e(x)\vcentcolon= e^{2\pi i x}$ and $\hat{g}$ denotes the Fourier transform of $g$. We define the related cosine polynomial
\begin{equation}
    \tilde{g}(t)=g(t)\left(\frac{2}{P}\sum_{n=0}^{P-1} \cos 2\pi nt\right)^2
\label{gtilde}
\end{equation}
and record some important properties. Note that $\tilde{g}$ and $g$ have exactly the same $d$ sign changes in $[0,1]$, and from the formula $2\cos\alpha \cos \beta = \cos(\alpha-\beta)+\cos(\alpha+\beta)$ it follows that $\tilde{g}$ is a cosine polynomial with coefficients in $(P^{-2}\cdot\mathbf{Z})\cap [-4M(S),4M(S)]$, recalling that $|a_n|\leqslant M(S)$ by definition. It is also clear that $|\tilde{g}(0)|= 4|g(0)|$. Finally, we analyse the structure of $\tilde{g}$. By assumption, we can write
\begin{equation*}
    g(t)=\sum_{j=1}^K\sum_{n\in I_j} a_n\cos(2\pi nt)
\end{equation*}
where each of the sequences $(a_n)_{n\in I_j}$ is periodic with period $P$. For each $j\in[K]$, we split the interval $I_i=[u_i,v_i]$ into the interval $J_i=[u_i+2P,v_i-2P]$ and the set of remaining elements $I_i\setminus J_i$ (there may exist $i$ for which $J_i$ is empty). Define $S_j$ to be the sum of any $P$ consecutive terms in the sequence $(a_n)_{n\in I_j}$, so note that $S_j$ is a constant only depending on $j$ as $(a_n)_{n\in I_j}$ is $P$-periodic. Hence, if $j\in[K]$ and $n\in J_j$, then \eqref{gtilde} and the expansion 
\begin{align*}
    \left(\frac{2}{P}\sum_{n=0}^{P-1} \cos 2\pi nt\right)^2=\frac{1}{P^2}\left(1+2\sum_{m=-P+1}^{P-1}e(mt)+\sum_{m=-2P+1}^{2P-1}(2P-1-|m|)e(mt)\right),
\end{align*} give
\begin{align*}
    \hat{\tilde{g}}(n)&=\frac{1}{2P^{2}}\left(a_n+2\sum_{m=-P+1}^{P-1}a_{n+m}+\sum_{m=-2P+2}^{2P-2}(2P-1-|m|)a_{n+m}\right)\\
    &=\frac{1}{2P^{2}}\left(4P\sum_{m=0}^{P-1}a_{n+m}\right)=\frac{2S_j}{P}
\end{align*}
since $J_j+[-2P,2P]\subset I_j$ so that all the terms $a_{n+m}$ appearing above are part of the periodic sequence $(a_n)_{n\in I_j}$. Therefore we may write
\begin{align*}
    \tilde{g}(t)=\sum_{j=1}^K\frac{2S_j}{P}\sum_{n\in J_j} \cos(2\pi nt)+\sum_{n\in [N+2P]\setminus \cup_j J_j}2\hat{\tilde{g}}(n)\cos(2\pi nt).
\end{align*}
Note that the sum on the right hand side has at most $O(PK)$ terms since there are $K$ intervals $I_j$ and each of these contributes at most $|I_j\setminus J_j|\leqslant 4P$ terms to this second sum. Hence, if we consider all these terms as a sum over $O(PK)$ intervals of length $1$, then we can find 
\begin{equation}
    \tilde{K}=K+O(PK)=O(PK)
    \label{tildeKbound}
\end{equation} intervals $\tilde{J}_1,\tilde{J}_2,\dots,\tilde{J}_{\tilde{K}}$ partitioning $[N+2P]$ and constants $c_j$ which satisfy $c_j\in (P^{-2}\cdot\mathbf{Z})\cap [-4M(S),4M(S)]$ so that
\begin{align}
    \tilde{g}(t)=\sum_{j=1}^{\tilde{K}}c_j\sum_{n\in \tilde{J}_j} \cos(2\pi nt).
\label{tildegpart}
\end{align}
The proof of Proposition \ref{improvetheo} comes down to finding two bounds for the $L^1$-norm of $\tilde{g}$.
The lower bound will follow from an application the Littlewood $L^1$ conjecture \ref{Litllconj}.
\begin{lemma}\label{lem:L1lowerbound}
    Let $\tilde{g}$ be as in \eqref{tildegpart}. Then \begin{equation}
        \lVert\tilde{g}\rVert_1\gg P^{-2}\log\frac{|g(0)|}{M(S)}.
        \label{L1lower}
    \end{equation}
\end{lemma}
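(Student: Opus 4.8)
The plan is to pass to the Fourier side of $\tilde g$, observe that the nonzero Fourier coefficients of $\tilde g$ are simultaneously bounded away from zero and very numerous, and then feed this into the Littlewood $L^1$ conjecture (Theorem~\ref{Litllconj}) to turn ``many coefficients of roughly equal size'' into a logarithmic lower bound for $\lVert\tilde g\rVert_1$.

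In detail, I would first write $\tilde g(t)=\sum_{|n|\leqslant N+2P}\hat{\tilde g}(n)e(nt)$ and read off from \eqref{tildegpart} that $\hat{\tilde g}(n)=c_j/2$ whenever $n\geqslant 1$ lies in $\tilde J_j$, with $\hat{\tilde g}(-n)=\hat{\tilde g}(n)$ since $\tilde g$ is even and $\hat{\tilde g}(0)=c_{j_0}$ for the interval $\tilde J_{j_0}$ containing $0$. Two elementary bounds then drive the argument: every nonzero $c_j$ is a nonzero integer multiple of $P^{-2}$, so $|\hat{\tilde g}(n)|\geqslant P^{-2}/2$ at each frequency where $\hat{\tilde g}$ does not vanish; and $|c_j|\leqslant 4M(S)$, so $|\hat{\tilde g}(n)|\leqslant 4M(S)$ everywhere. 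Writing $R$ for the number of frequencies with $\hat{\tilde g}(n)\neq 0$ and using $\tilde g(0)=\sum_n\hat{\tilde g}(n)=4g(0)$, the second bound gives
\[
4|g(0)|\ =\ \Bigl|\sum_n\hat{\tilde g}(n)\Bigr|\ \leqslant\ \sum_{n:\,\hat{\tilde g}(n)\neq 0}|\hat{\tilde g}(n)|\ \leqslant\ 4M(S)\,R,
\]
so that $R\geqslant |g(0)|/M(S)$.

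To conclude I would list the occurring frequencies in increasing order as $n_1<\dots<n_R$ and apply Theorem~\ref{Litllconj} with $a_j=\hat{\tilde g}(n_j)$, obtaining
\[
\lVert\tilde g\rVert_1\ \gg\ \sum_{j=1}^{R}\frac{|\hat{\tilde g}(n_j)|}{j}\ \geqslant\ \frac{P^{-2}}{2}\sum_{j=1}^{R}\frac1j\ \gg\ P^{-2}\log R\ \geqslant\ P^{-2}\log\frac{|g(0)|}{M(S)},
\]
which is \eqref{L1lower}. (If $|g(0)|\leqslant M(S)$ the statement is vacuous, so one may assume throughout that $R\geqslant 2$.) I do not expect a genuine obstacle here; the proof is essentially a bookkeeping exercise on top of \eqref{tildegpart}, and the only point that needs care is that Theorem~\ref{Litllconj} must be applied with $k$ equal to the number $R$ of frequencies that actually occur, so that the harmonic weights $1/j$ range over $1,\dots,R$ --- it is precisely this that converts the lower bound $R\gg|g(0)|/M(S)$ on the number of coefficients into the logarithm $\log R$.
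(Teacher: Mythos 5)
Your proof is correct and follows essentially the same route as the paper: lower-bound each nonzero coefficient by a multiple of $P^{-2}$, apply Theorem~\ref{Litllconj}, and use the upper bound $4M(S)$ together with $\tilde g(0)=4g(0)$ to show the number of nonzero frequencies is at least $|g(0)|/M(S)$. The only cosmetic difference is that you count individual nonzero Fourier frequencies $R$ while the paper counts $\sum_{j:c_j\neq 0}|\tilde J_j|$, which is the same quantity up to a factor absorbed in the implied constant.
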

\begin{proof}
    Recall that the coefficients $c_j$ lie in $P^{-2}\cdot\mathbf{Z}$ so that each non-zero $c_j$ satisfies $|c_j|\geqslant P^{-2}$. The bound in Theorem \ref{Litllconj} therefore immediately gives
\begin{align*}
    \lVert\tilde{g}\rVert_1&\gg P^{-2}\log\left(\sum_{j:c_j\neq 0} |\tilde{J}_j|\right).
\end{align*}
We also have that $|c_j|\leqslant 4M(S)$ which yields the following lower bound for the number of non-zero Fourier coefficients of $\tilde{g}$:
\begin{align*}
    4M(S)\sum_{j: c_j\neq 0}|\tilde{J}_j|&\geqslant \left|\sum_{j: c_j\neq 0}c_j|\tilde{J}_j|\right|\\
    &=|\tilde{g}(0)|.
\end{align*}
Hence, $\sum_{j:c_j\neq 0} |\tilde{J}_j|\geqslant \frac{|\tilde{g}(0)|}{4M(S)}=\frac{|g(0)|}{M(S)}$ since $\tilde{g}(0)=4g(0)$ and plugging this in the lower bound for $\lVert \tilde{g}\rVert_1$ above finishes the proof.
\end{proof} 
We now move on to obtaining an upper bound for $\lVert\tilde{g}\rVert_1$. We assumed that $g$ has $d$ sign changes in $(0,1/2)$ and hence so does $\tilde{g}$. Let us denote these by $\xi_1,\xi_2,\dots,\xi_d$. If we write $\xi_{d+1}=1/2$, then as $\tilde{g}$ is even we get
\begin{align*}
    \lVert\tilde{g}\rVert_1&=2\int_0^{1/2}|\tilde{g}(t)|\,dt\\
    &= 2\sum_{m=1}^d \varepsilon_m\int_{\xi_m}^{\xi_{m+1}}\tilde{g}(t)\,dt,
\end{align*}
for some signs $\varepsilon_m\in\{-1,1\}$. We define the function $G(x)\vcentcolon=\int_0^x\tilde{g}(t)\,dt$ and hence obtain the following upper bound
\begin{align}
    \lVert \tilde{g}\rVert_1\leqslant 4d\sup_{x\in[0,1/2]}|G(x)|.
    \label{L1upper}
\end{align}
This bound above can be quite wasteful for general $P$ and $K$ and could be improved, but the loss of a factor of $d$ is unimportant for \eqref{structurerootsprop} in the situation provided by the structural result Proposition \ref{smallperiodprop} where $P,\log K$ are exponential in $d$.
\begin{lemma}
Let $D_n(t)=1/2+\sum_{m=1}^n \cos(mt)$ be a Dirichlet kernel. Then uniformly for $x\in[0,\pi]$, we have
\begin{equation}
    \int_0^x D_n(t)\,dt = \int_0^x \frac{\sin nt}{t} \,dt +O\left(\frac{1}{n}\right).
\end{equation}
\label{D_napprox}
\end{lemma}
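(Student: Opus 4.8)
The plan is to exploit the closed form of the Dirichlet kernel, $D_n(t)=\frac{\sin((n+1/2)t)}{2\sin(t/2)}$ for $t\notin 2\pi\mathbf{Z}$, and to replace the weight $\frac{1}{2\sin(t/2)}$ by $\frac1t$. Write $\frac{1}{2\sin(t/2)}=\frac1t+h(t)$ with $h(t):=\frac{1}{2\sin(t/2)}-\frac1t$; Taylor expanding $\sin(t/2)$ at the origin gives $h(t)=\frac{t}{24}+O(t^3)$ near $0$, so $h$ extends to a continuously differentiable function on $[0,\pi]$ with $h(0)=0$, and in particular $|h|$ and $|h'|$ are bounded there. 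Consequently, for every $x\in[0,\pi]$,
\begin{equation*}
\int_0^x D_n(t)\,dt=\int_0^x\frac{\sin((n+1/2)t)}{t}\,dt+\int_0^x\sin\!\big((n+\tfrac12)t\big)h(t)\,dt ,
\end{equation*}
and it remains to handle the two terms on the right.

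For the second term I would integrate by parts:
\begin{equation*}
\int_0^x\sin\!\big((n+\tfrac12)t\big)h(t)\,dt=\Big[-\tfrac{\cos((n+1/2)t)}{n+1/2}h(t)\Big]_0^x+\frac{1}{n+1/2}\int_0^x\cos\!\big((n+\tfrac12)t\big)h'(t)\,dt .
\end{equation*}
Since $|h|$ and $|h'|$ are bounded on $[0,\pi]$ and the interval of integration has length at most $\pi$, the right-hand side is $O(1/n)$ uniformly in $x$.

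For the first term, the substitution $u=(n+\tfrac12)t$ gives $\int_0^x\frac{\sin((n+1/2)t)}{t}\,dt=\int_0^{(n+1/2)x}\frac{\sin u}{u}\,du$, while $u=nt$ gives $\int_0^x\frac{\sin nt}{t}\,dt=\int_0^{nx}\frac{\sin u}{u}\,du$, so it suffices to bound $\big|\int_{nx}^{(n+1/2)x}\frac{\sin u}{u}\,du\big|\ll 1/n$ uniformly for $x\in[0,\pi]$. Here I would split into cases. If $nx\geqslant 1$, then on the interval of integration, which has length $x/2$, one has $|\sin u/u|\leqslant 1/u\leqslant 1/(nx)$, so the integral is at most $(x/2)(1/(nx))=1/(2n)$. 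If instead $nx<1$, then the interval has length $x/2<1/(2n)$ while $|\sin u/u|\leqslant 1$ on it, so the integral is again at most $1/(2n)$. Either way the bound is $O(1/n)$, and combining the three displays gives the claim.

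The argument is short; the one point that needs care is uniformity in $x$ all the way down to $x$ of order $1/n$ and below. There the crude estimate exploiting the decay of $\sin u/u$ is useless, and one must instead observe that the interval $[nx,(n+\tfrac12)x]$ is itself short — this is precisely the content of the case distinction above, and I would regard it as the main (mild) obstacle. The smoothness of $h$ and the integration by parts are entirely routine.
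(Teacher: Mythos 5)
Your argument is correct, and it is uniform in $x$ as required: the $C^1$ extension of $h(t)=\tfrac{1}{2\sin(t/2)}-\tfrac1t$ to $[0,\pi]$ with $h(0)=0$ is legitimate (the singularity at $0$ is removable, the function being real-analytic on $(-2\pi,2\pi)$), the integration by parts gives $O(1/n)$ uniformly, and your two-case estimate for $\bigl|\int_{nx}^{(n+1/2)x}t^{-1}\sin t\,dt\bigr|$ correctly handles the regime $x\lesssim 1/n$ where decay of $t^{-1}\sin t$ is unavailable. The route differs from the paper's in its mechanics. The paper first subtracts $\tfrac12\cos(nt)$ from $D_n$ (its integral over any interval being $O(1/n)$), which leaves $D_n^*(t)=\tfrac12\cot(t/2)\sin(nt)$ at the \emph{same} frequency $n$ as the target $\tfrac{\sin nt}{t}$; it then writes the difference of the integrals as $\int_0^{2\pi}H_x(t)\sin(nt)\,dt$ with $H_x(t)=\tfrac12\cot(t/2)-\tfrac1t$ cut off at $x$, and invokes Zygmund's bound $\hat f(n)\ll V(f)/n$ for functions of bounded variation, noting $V(H_x)=O(1)$ uniformly in $x$. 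You instead keep the closed form $\tfrac{\sin((n+1/2)t)}{2\sin(t/2)}$, so you must additionally absorb the frequency shift from $n+\tfrac12$ to $n$, which is exactly your sine-integral comparison; in exchange, your integration by parts against the $C^1$ correction $h$ is a self-contained, elementary substitute for the citation to the BV Fourier-coefficient estimate (indeed it is that estimate in the special case of a $C^1$ function vanishing at $0$). Net effect: the paper's proof is shorter by outsourcing uniformity to a quoted theorem and avoiding the frequency-shift step; yours is fully elementary and makes the uniformity in $x$ explicit.
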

The proof can be assembled by combining some results from chapter II of Zygmund's book \cite{zygmund}.
\begin{proof}
    Let $D^*_n(t)= D_n(t)-1/2\cos(nt)$. Since $\int_J \cos(nt)\,dt=O(1/n)$ uniformly for all intervals $J\subset[0,2\pi]$, it suffices to prove that $\int_0^xD^*_n(t)\,dt =\int_0^x \frac{\sin nt}{t} \,dt +O(1/n)$. We have the standard formula  $D^*_n(t)=\frac{1}{2}\cot(t/2)\sin(nt)$, see \cite[(5.2)]{zygmund}, and hence we obtain
    \begin{align*}
        \int_0^xD^*_n(t)-\frac{\sin(nt)}{t}\,dt = \int_0^{2\pi}H_x(t)\sin(nt)\,dt
    \end{align*}
    where $H_x(t)\vcentcolon= \begin{cases}
\frac{1}{2}\cot(t/2)-1/t &\text{if $t\in (0,x)$}\\
0 &\text{if $t \in[x,2\pi]$}
\end{cases}$. Let $f$ be a function on $(0,2\pi)$ whose total variation $V(f)$ is bounded, then we can bound the Fourier coefficients of $f$ in terms of $V(f)$ as follows, see \cite[Theorem 4.12]{zygmund}:
\begin{equation}
    \hat{f}(n)\ll \frac{V(f)}{n}.
\label{variationbound}
\end{equation}
Using the fact that $H_\pi(t)$ is bounded, vanishes at $t=0$ and has bounded variation on $[0,\pi]$, we see that $V(H_x)=O(1)$ is uniformly bounded for all $x\in[0,\pi]$. Hence, from \eqref{variationbound} we deduce that uniformly for $x\in[0,\pi]$ we have
\begin{align*}
    \int_0^{2\pi}H_x(t)\sin(nt)\,dt \ll |\hat{H_x}(n)|\ll n^{-1}.
\end{align*}
\end{proof}
Let $\operatorname{Si}(x)\vcentcolon=\int_0^x\frac{\sin t}{t}\,dt$ denote the sine integral. Then by Lemma \ref{D_napprox} we get
\begin{align*}
    \int_0^xD_n(2\pi t)\,dt = \frac{1}{2\pi}\operatorname{Si}(2\pi nx)+O(1/n)
\end{align*}
uniformly for all $x\in[0,1/2]$. Getting back to $\tilde{g}$, if we write the intervals $\tilde{J}_j=(n_{j-1},n_j]$, then we can rewrite \eqref{tildegpart} as
\begin{equation}
    \tilde{g}(t)=\sum_{j=1}^{\tilde{K}}c_j(D_{n_j}(2\pi t)-D_{n_{j-1}}(2\pi t))
\end{equation}
so that for $x\in[0,1/2]$ we have
\begin{align}
    G(x)&=\int_0^x\tilde{g}(t)\,dt\nonumber\\
    &=\frac{1}{2\pi}\sum_{j=1}^{\tilde{K}}c_j\left(\operatorname{Si}(2\pi n_jx)-\operatorname{Si}(2\pi n_{j-1}x)\right) +O\left(\max_j|c_j|\sum_{j=1}^{\tilde{K}}\frac{1}{n_j}\right)\nonumber\\
    &=\frac{1}{2\pi}\sum_{j=1}^{\tilde{K}}c_j\int_{2\pi n_{j-1}x}^{2\pi n_jx}\frac{\sin t}{t}\,dt+O\left(M(S)\log(\tilde{K})\right),
\label{Gbound1}
\end{align}
where we used that $\max|c_j|\leqslant 4M(S)$ and that $n_1<n_2<\dots<n_{\tilde{K}}$ to bound the error term. We have therefore shown that
\begin{align}
    \sup_{x\in[0,1/2]}|G(x)|\ll M(S)\sup_{x\in[0,\pi]}\sum_{j=1}^{\tilde{K}}\left|\int_{n_{j-1}x}^{n_jx}\frac{\sin t}{t}\,dt\right|+O(M(S)\log\tilde{K}).
\label{Gbound2}
\end{align}
The first term in \eqref{Gbound2} will be bounded with the help of the next lemma.
\begin{lemma}
    There exists an absolute constant $C>0$ such that for all $y'>y>1$ we have
    \begin{equation*}
        \left|\int_y^{y'}\frac{\sin t}{t}\,dt\right|\leqslant Cy^{-1}\min\left(1,|y'-y|\right).
    \end{equation*}
\label{sincbound}
\end{lemma}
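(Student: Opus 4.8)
The plan is to establish two separate, entirely elementary upper bounds for the integral and then keep the better of the two.

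First, since $t\geqslant y>1$ throughout the range of integration, the crude pointwise estimate $|\sin t|/t\leqslant 1/y$ gives
$$\left|\int_y^{y'}\frac{\sin t}{t}\,dt\right|\leqslant \frac{|y'-y|}{y}.$$
This is already the bound we want in the regime $|y'-y|\leqslant 1$. To handle long intervals, I would integrate by parts (with $u=1/t$, $dv=\sin t\,dt$), writing
$$\int_y^{y'}\frac{\sin t}{t}\,dt=\frac{\cos y}{y}-\frac{\cos y'}{y'}-\int_y^{y'}\frac{\cos t}{t^2}\,dt.$$
Since $y'>y>1$, the two boundary terms are each at most $1/y$ in absolute value, and the remaining integral is bounded by $\int_y^{\infty}t^{-2}\,dt=1/y$. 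Hence $\left|\int_y^{y'}\frac{\sin t}{t}\,dt\right|\leqslant 3/y$, with no dependence on $y'$ at all.

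Combining the two bounds and using the elementary inequality $\min(3,a)\leqslant 3\min(1,a)$ for $a\geqslant 0$ yields
$$\left|\int_y^{y'}\frac{\sin t}{t}\,dt\right|\leqslant \frac1y\min\bigl(3,\,|y'-y|\bigr)\leqslant \frac3y\min\bigl(1,\,|y'-y|\bigr),$$
so the lemma holds with $C=3$.

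There is no genuine obstacle in this lemma; it is a routine tail estimate for the sine integral. The only thing requiring a little care is packaging the two regimes (short interval versus long interval) into the single clean statement involving $\min(1,|y'-y|)$, which is immediate once both bounds are in hand.
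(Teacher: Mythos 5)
Your proof is correct. The first half (the bound $y^{-1}|y'-y|$ from the pointwise estimate $|\sin t|/t\leqslant 1/y$) is exactly what the paper does. For the uniform bound $O(y^{-1})$ you take a different, though equally elementary, route: you integrate by parts, bounding the two boundary terms by $1/y$ each and the remaining integral by $\int_y^\infty t^{-2}\,dt=1/y$, which gives the explicit constant $C=3$. The paper instead splits $(y,y')$ at the integer multiples of $\pi$ and uses that the integrals $\int_{j\pi}^{(j+1)\pi}t^{-1}\sin t\,dt$ alternate in sign and decrease in absolute value, so the alternating sum is controlled by its first term and the two end pieces are each $O(y^{-1})$. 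Both arguments are sound and short; integration by parts is a bit more mechanical and yields a clean explicit constant, while the paper's alternating-series argument exploits the oscillation of $\sin t$ directly without introducing the $t^{-2}$ tail integral. Your final step combining the two bounds via $\min(3,a)\leqslant 3\min(1,a)$ is fine.
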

\begin{proof}
    From the pointwise bound $|\frac{\sin t}{t}|\leqslant t^{-1}$ we immediately deduce $\left|\int_y^{y'}\frac{\sin t}{t}\,dt\right|\leqslant y^{-1}|y'-y|.$ For the other bound, assume that $a\pi,(a+1)\pi,\dots,b\pi$ are the integer multiples of $\pi$ in $(y,y')$. The sequence $\left(\int_{j\pi}^{(j+1)\pi}t^{-1}\sin t\,dt\right)_j$ is alternating in sign and strictly decreasing in absolute value, so
    \begin{align*}
       \left|\int_y^{y'}\frac{\sin t}{t}\,dt\right|&\leqslant \left|\int_y^{a\pi} t^{-1}\sin t\,dt\right|+\left|\sum_{j=a}^{b-1}\int_{j\pi}^{(j+1)\pi}t^{-1}\sin t\,dt\right|+\left|\int_{b\pi}^{y'}t^{-1}\sin t\,dt\right|\\
       &\ll y^{-1}+\left|\int_{a\pi}^{(a+1)\pi}t^{-1}\sin t\,dt\right|+(y')^{-1}\ll y^{-1}.
    \end{align*}
\end{proof}
Combining \eqref{L1upper} and \eqref{Gbound2}, we have  shown so far that \begin{equation}\label{eq:L1upperbound}
\lVert\tilde{g}\rVert_1\ll  dM(S)\sup_{x\in[0,\pi]}\sum_{j=1}^{\tilde{K}}\left|\int_{n_{j-1}x}^{n_jx}\frac{\sin t}{t}\,dt\right|+dM(S)\log\tilde{K}.
\end{equation}
The final step in proving an upper bound for $\lVert \tilde{g}\rVert_1$ is bounding the first term above. 
\begin{lemma}
\label{lem:Sibound}
    Let $x\in[0,\pi]$ and let $n_1<n_2<\dots<n_{\tilde{K}}$ be positive integers. Then 
    $$\sum_{j=1}^{\tilde{K}}\left|\int_{n_{j-1}x}^{n_jx}\frac{\sin t}{t}\,dt\right|\ll 1+\log \tilde{K}.$$
\end{lemma}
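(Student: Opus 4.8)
The plan is to split the sum according to whether the argument $n_jx$ is small (say $\leqslant 1$), moderate, or large, and to use the two bounds from Lemma \ref{sincbound} in the appropriate regimes. Fix $x\in[0,\pi]$; we may assume $x>0$. First I would isolate the terms with $n_{j-1}x\leqslant 1$. For these we use the crude bound $\left|\int_{n_{j-1}x}^{n_jx}\frac{\sin t}{t}\,dt\right|\leqslant \int_0^{n_jx}\frac{\sin t}{t}\,dt\ll 1$, but there could be many such terms, so instead I would telescope: since on $[0,1]$ the integrand $\frac{\sin t}{t}$ is positive, $\sum_{j:\,n_jx\leqslant 1}\left|\int_{n_{j-1}x}^{n_jx}\frac{\sin t}{t}\,dt\right|=\int_0^{n_{j^*}x}\frac{\sin t}{t}\,dt\ll 1$ where $j^*$ is the largest index with $n_{j^*}x\leqslant 1$, and the single straddling term (with $n_{j-1}x\leqslant 1<n_jx$) also contributes $O(1)$ by the same positivity/boundedness of $\operatorname{Si}$. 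So the small-argument part costs $O(1)$ total.

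For the remaining terms we have $n_{j-1}x>1$, and Lemma \ref{sincbound} applies with $y=n_{j-1}x$, $y'=n_jx$, giving $\left|\int_{n_{j-1}x}^{n_jx}\frac{\sin t}{t}\,dt\right|\ll (n_{j-1}x)^{-1}\min(1,(n_j-n_{j-1})x)$. Now I would further dyadically decompose the range of $n_{j-1}x$: for each integer $k\geqslant 0$ consider the block of indices $j$ with $2^k< n_{j-1}x\leqslant 2^{k+1}$. Within such a block, $(n_{j-1}x)^{-1}\asymp 2^{-k}$, and the lengths $(n_j-n_{j-1})x$ are positive and sum to at most $2^{k+1}$ (the total length of the block), while each individual contribution is capped at $1$ by the $\min$. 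Hence the block contributes $\ll 2^{-k}\sum_j\min(1,(n_j-n_{j-1})x)\ll 2^{-k}\cdot 2^{k+1}=O(1)$ — but this is too lossy if summed over all $k$, since the number of nonempty dyadic blocks could be as large as $\log(n_{\tilde K}x)$, which is not controlled by $\log\tilde K$. To fix this I would instead bound each block's contribution by $\min\big(2^{-k}\cdot(\text{number of terms in block}),\,O(1)\big)$: using the first estimate $2^{-k}\min(1,\cdot)\leqslant 2^{-k}$ per term, a block with $m_k$ terms contributes $\ll 2^{-k}m_k$, and also always $\ll 1$. Since $\sum_k m_k\leqslant \tilde K$ and there are at most $\lceil\log_2(n_{\tilde K}x)\rceil+1$ blocks, one gets $\sum_k\min(2^{-k}m_k,1)$, and bounding each term by $1$ for the $O(\log\tilde K)$ smallest-$k$ blocks while using $2^{-k}m_k\leqslant 2^{-k}\tilde K\leqslant 1$ for $k\geqslant\log_2\tilde K$ yields the total $O(1+\log\tilde K)$.

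The main obstacle is exactly this bookkeeping in the large-argument regime: the naive "each dyadic block costs $O(1)$" bound is governed by the size of the frequencies rather than their number, so I expect the crux is the two-sided estimate per block — exploiting the decay $2^{-k}$ from Lemma \ref{sincbound} to handle the blocks with $2^k\gtrsim\tilde K$, and simply counting terms (at most $\tilde K$ of them, hence $O(\log\tilde K)$ nonempty blocks matter before the decay kicks in) for the rest. Once the split into $\{n_{j-1}x\leqslant 1\}$ versus dyadic blocks of $\{n_{j-1}x>1\}$ is set up, each piece is a short computation using only Lemma \ref{sincbound} and $|\operatorname{Si}(x)|\ll 1$.
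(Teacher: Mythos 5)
Your argument is correct and rests on the same two ingredients as the paper's proof: an $O(1)$ treatment of the portion with $n_{j-1}x\leqslant 1$ (including one straddling term bounded by $\sup_y|\operatorname{Si}(y)|$), and Lemma \ref{sincbound} in the range $n_{j-1}x>1$, with telescoping of the increments inside blocks of the $t$-axis. The difference is in the block decomposition and the bookkeeping. The paper partitions the indices according to which \emph{unit} interval $(m,m+1]$ contains $n_jx$; telescoping inside such a block gives a contribution $\ll 1/m$, and $\log\tilde K$ then comes out of a harmonic sum over the at most $\tilde K$ nonempty blocks (this is exactly how the paper avoids the $\log(n_{\tilde K}x)$ loss you worry about: the sum $\sum 1/m$ is taken only over occupied blocks, of which there are at most $\tilde K$). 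You instead use \emph{dyadic} blocks $2^k<n_{j-1}x\leqslant 2^{k+1}$, where telescoping only gives $O(1)$ per block, and you recover $\log\tilde K$ by the additional observation that a block containing $m_k$ indices also costs $\ll 2^{-k}m_k$, so that scales with $2^k\gtrsim\tilde K$ are negligible and only $O(\log\tilde K)$ scales can be expensive. Both routes are of comparable length; the unit-block version needs no case split at $2^k\approx\tilde K$, while yours makes explicit that the answer is governed by the number of intervals rather than the size of the frequencies. One clause should be tightened: for $k\geqslant\log_2\tilde K$ it is not enough to note that each block's contribution is at most $1$ (there may be about $\log(n_{\tilde K}x)$ such blocks); you must sum the bound $2^{-k}m_k\leqslant 2^{-k}\tilde K$ geometrically over $k\geqslant\log_2\tilde K$ to get an $O(1)$ total, which is evidently what you intend by ``exploiting the decay $2^{-k}$,'' but as written the sentence only asserts a per-block bound.
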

\begin{proof}
    Let $x\in[0,\pi]$ and $n_1<n_2<\dots<n_{\tilde{K}}$. For each $m\in\mathbf{N}$ which satisfies $m<n_{\tilde{K}}x$ we define 
\begin{align*}
    j_m\vcentcolon=\min\{j\in[\tilde{K}]: n_jx>m\}.
\end{align*}
This produces a set of indices $\{j_1<j_2<\dots<j_{\tilde{k}}\}\subset[\tilde{K}]$ where $\tilde{k}$ is the largest integer less than $n_{\tilde{K}}x$. In other words, we define a partition of $[1,\tilde{K}]$ into intervals $[1,j_1),[j_1,j_2),\dots,[j_{\tilde{k}},\tilde{K}]$ such that
\begin{align*}
    n_jx\in \begin{cases}
        [0,1] & \text{ if $j\in [1,j_1)$,}\\
        (m,m+1] & \text{ if $j\in [j_m,j_{m+1})$ for each $m=1,2,\dots,\tilde{k}$,}\\
        (\tilde{k},\tilde{k}+1] & \text{ if $j\in [j_{\tilde{k}},\tilde{K}]$.}
    \end{cases}
\end{align*} Then we can bound the sum of interest as follows
\begin{align}
    \sum_{j=1}^{\tilde{K}}\left|\int_{n_{j-1}x}^{n_jx}\frac{\sin t}{t}\,dt\right|&\leqslant \int_0^1\left|\frac{\sin t}{t}\right|\,dt+\left|\int_{n_{j_1-1}x}^{n_{j_1}x} \frac{\sin t}{t}\,dt\right|\nonumber\\
    &+\sum_{j=j_1}^{j_{\tilde{k}}}\left|\int_{n_{j}x}^{n_{j+1}x}\frac{\sin t}{t}\,dt\right|+\left|\int_{n_{j_{\tilde{k}}}x}^{n_{{\tilde{K}}}x} \frac{\sin t}{t}\,dt\right|,\label{eq:finalstep}
\end{align}
noting that $n_{j_1-1}x\leqslant 1$ by definition. The first term on the right hand side is $O(1)$, and the second and the fourth are each bounded by $2\sup_y|\operatorname{Si}(y)|$ which is also $O(1)$. Indeed, it is a well-known fact that $\sup_{y\in\mathbf{R}}\left|\operatorname{Si}(y)\right|=O(1)$ which can be proved by observing that $t^{-1}\sin t$ is bounded on $\mathbf{R}$ and by recalling the classical result that $\lim_{y\to\infty}\operatorname{Si}(y)=\frac{\pi}{2}$. Using Lemma \ref{sincbound}, the third term can be bounded by
\begin{align}
    \label{eq:lem2bound}
    \sum_{j=j_1}^{j_{\tilde{k}}}\left|\int_{n_{j}x}^{n_{j+1}x}\frac{\sin t}{t}\,dt\right|&\leqslant \sum_{j=j_1}^{j_{\tilde{k}}}\frac{1}{n_{j}x}\min(1,|n_{j+1}x-n_jx|).
\end{align}
Recall from the definition of the indices $j_m$ that $n_{j}x\in (m,m+1]$ for all $j\in[j_m,j_{m+1})$ and hence we can bound
\begin{align*}
    \sum_{j=j_m}^{j_{m+1}-1}\frac{1}{n_{j}x}\min(1,|n_{j+1}x-n_jx|)&\leqslant \frac{1}{m}\sum_{j=j_m}^{j_{m+1}-1}\min(1,|n_{j+1}x-n_jx|)\\
    &\leqslant\frac{1}{m}\left(1+\sum_{j=j_m}^{j_{m+1}-2}|n_{j+1}x-n_jx|\right)\\
    &\leqslant \frac{1}{m}\left(1+\left((m+1)-m\right)\right)\ll \frac{1}{m}.
\end{align*}
Finally, the intervals $[j_1,j_2),[j_2,j_3),\dots,[j_{\tilde{k}-1},j_{\tilde{k}}]$ partition $[j_1,j_{\tilde{k}}]$ and hence we can bound the total sum in \eqref{eq:lem2bound} by
\begin{align*}
    \sum_{m=1}^{\tilde{k}}\sum_{j=j_m}^{j_{m+1}-1}\frac{1}{n_{j}x}\min(1,|n_{j+1}x-n_jx|)
    &\ll \sum_{m=1}^{\tilde{k}}\frac{1}{m}\\
    &\ll\log \tilde{k}\\
    &\leqslant \log \tilde{K}.
\end{align*}
We have now proved that \eqref{eq:finalstep} is bounded by $O(1+\log \tilde{K})$. 
\end{proof}
We are now in a position to prove the main result of this section.
\begin{proof}[Proof of Proposition \ref{improvetheo}]
Plugging the estimate from Lemma \ref{lem:Sibound} in \eqref{eq:L1upperbound}, we obtain $$\lVert\tilde{g}\rVert_1\ll dM(S)\log\tilde{K}.$$ In Lemma \ref{lem:L1lowerbound} we proved the lower bound $$\lVert\tilde{g}\rVert_1\gg P^{-2}\log \frac{|g(0)|}{M(S)}.$$ Combining these two bounds yields the desired inequality in Proposition \ref{improvetheo} upon recalling that $\tilde{K}=O(PK)$ by \eqref{tildeKbound}.
\end{proof}

\section{Structural results for trigonometric polynomials with few roots}
\label{sec:4}
We begin by recalling some of the set-up from Erd\'elyi and Sahasrabudhe's papers. The first step in their approach to Littlewood's problem consists of proving a structural result which in rough terms states that if $g(t) = \sum_{m=0}^N a_m \cos mt$ has $d$ roots, then the interval $[N]$ can be partitioned into $K(d)=O_d(1)$ intervals such that the coefficient sequence $(a_m)$ is periodic with period $P(d)=O_d(1)$ on each interval. The goal of this section is to prove Proposition \ref{smallperiodprop} in which we obtain an exponential improvement for the dependence of the period $P(d)$ on $d$ compared to Lemma 3.9 in \cite{erdelyi}, which itself improved on Sahasrabudhe's result \cite[Lemma 14]{sahasrabudhe} by proving that $K(d)=\exp(\exp(d^{1+o(1)}))$ instead of $\exp(\exp(d^{2+o(1)}))$. It should be noted that our work in this section is based on the ideas in \cite{erdelyi,sahasrabudhe}.

\bigskip

Let $S\subset \mathbf{Z}$ be finite and let $g$ be a cosine polynomial 
\begin{equation}
    g(t) = \sum_{m=0}^N a_m \cos mt
\label{cosinepolygene1}
\end{equation} with coefficients $a_m\in S$. Let $g$ have degree $N$ as a cosine polynomial so that we can write $2g(t)=G(e^{it})e^{-iNt}$ for a degree $2N$ polynomial $G(z)$ whose coefficients lie in $S\cup2S$.\footnote{All the coefficients of $G$ except for the coefficient of $z^N$ lie in $S$.} Throughout this section, we assume that $g$ has $d$ roots in $(0,\pi)$. We begin by collecting a number of useful lemmas.

\bigskip

Let $\boldsymbol{x}\in\mathbf{C}^B$ be a vector of length $B$ and let $1\leqslant b \leqslant B$. Borrowing terminology from \cite{sahasrabudhe}, we call the vectors
\begin{equation*}
    (x(r+1), x(r+2), \dots, x(r+b)) \in \mathbf{C}^{b}, \quad r=0,1, \dots, B-b,
\end{equation*}
the $b$-windows of $\boldsymbol{x}=(x(k))_{k=1}^B$. We need the following purely combinatorial lemma which directly follows from combining Lemmas 10 and 11 in \cite{sahasrabudhe}.

\begin{lemma}[\cite{sahasrabudhe}, Lemmas 10 and 11] Let $S\subset \mathbf{Z}$ be a finite set, let $b\in \mathbf{N}$ and let $\boldsymbol{x}\in\mathbf{R}^K$ be a sequence of length $K$ with terms in $S$. Let $1\leqslant u<v\leqslant K$ be integers and let $V$ denote the vector subspace of $\mathbf{R}^b$ spanned by the $b$-windows 
$$
(x(r+1), x(r+2), \dots, x(r+b)) \in S^{b}, \quad r=u, u+1, \dots, v-b.
$$
Suppose that $v-u>$ $|S|^{b}+3 b$ and that the non-zero vector $(c_1,c_2,\dots,c_b)$ lies in $V^{\perp}$. Then there exists a $t<b$ and $t$ sequences of length $v-u-2b+1$ that we denote by
$$(x_j(r))_{r \in[u+b, v-b]}\in \mathbf{C}^{v-u-2b+1}, \quad j=1,2, \dots, t$$
such that
$$
x(r)=x_{1}(r)+x_{2}(r)+\dots+x_{t}(r), \quad r \in[u+b, v-b]
$$
and such that the sequence $(x_j(r))_{r \in[u+b, v-b]}$ is periodic with period $p_{j} \leqslant p(b)=O(b \log \log b)$ for each $j=1,2, \dots, t$. Moreover, there exist constants $\alpha_j\in\mathbf{C}$ and roots of unity $\omega_j$ which are also roots of the polynomial $c_1+c_2z+\dots+c_bz^{b-1}$ such that for all $j\in[t]$ we have $x_j(r)=\alpha_j\omega_j^r$ when $r \in[u+b, v-b]$.
\label{rootsofunitylemma}
\end{lemma}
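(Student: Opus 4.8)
The plan is to read the hypothesis $(c_1,\dots,c_b)\in V^{\perp}$ as a linear recurrence satisfied by $\boldsymbol{x}$ and then to exploit the twin facts that this sequence is bounded and integer‑valued. Orthogonality of $(c_1,\dots,c_b)$ to every $b$‑window says exactly that $\sum_{i=1}^{b}c_i\,x(r+i)=0$ for all $r\in[u,v-b]$. Writing $c_k$ and $c_\ell$ for the lowest‑ and highest‑index non‑zero coefficients, this is an invertible linear recurrence of ``width'' $n=\ell-k\le b-1$: one can solve for $x(r+\ell)$ in terms of the $n$ preceding values, and also backwards for $x(r+k)$ in terms of the $n$ following ones. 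If $n=0$ then $x\equiv 0$ on the interval and we take $t=0$, so we may assume $n\ge 1$.

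\emph{Periodicity.} The state vectors $\bigl(x(r+k),\dots,x(r+\ell-1)\bigr)$ lie in the finite set $S^{n}$, of size at most $|S|^{b-1}$, and the recurrence induces an injection — hence a bijection — on the states that actually occur. Consequently $r\mapsto x(r)$ is \emph{purely periodic}, with some period $q\le |S|^{b-1}$, on an interval containing $[u+b,v-b]$. The role of the hypothesis $v-u>|S|^{b}+3b$ is precisely to guarantee that this interval of periodicity has length $>q$, which is what the Fourier step requires.

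\emph{Fourier analysis modulo $q$.} On $[u+b,v-b]$ expand $x(r)=\sum_{\zeta^{q}=1}\widehat{x}(\zeta)\zeta^{r}$. Substituting this into the recurrence, and using that the characters $r\mapsto\zeta^{r}$ (as $\zeta$ runs over the $q$‑th roots of unity) are linearly independent over any interval of length $\ge q$, one gets $\widehat{x}(\zeta)\,\zeta\,c(\zeta)=0$, where $c(z)=c_1+c_2z+\dots+c_bz^{b-1}$; since $\zeta\neq 0$, every $\zeta$ with $\widehat{x}(\zeta)\neq 0$ is a root of $c$. Let $\omega_1,\dots,\omega_t$ be these roots of unity, put $\alpha_j=\widehat{x}(\omega_j)\in\mathbf{C}$ and $x_j(r)=\alpha_j\omega_j^{r}$, so that $x=\sum_j x_j$ on $[u+b,v-b]$ and $x_j$ has period equal to the multiplicative order $p_j$ of $\omega_j$. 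Since $\deg c\le b-1$ we immediately obtain $t\le b-1<b$.

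\emph{Bounding the periods (the crux).} A complex polynomial of degree $b-1$ can vanish at a root of unity of arbitrarily large order, so $t\le b-1$ does not by itself bound any $p_j$; here one must use that $\boldsymbol{x}$ is rational‑valued, which makes its discrete Fourier transform Galois‑equivariant, $\widehat{x}(\sigma\zeta)=\sigma(\widehat{x}(\zeta))$ for $\sigma\in\mathrm{Gal}(\overline{\mathbf{Q}}/\mathbf{Q})$. Hence if $\widehat{x}(\omega_j)\neq 0$ and $\omega_j$ has order $m=p_j$, then $\widehat{x}$ is non‑zero at all $\varphi(m)$ primitive $m$‑th roots of unity, each of which is then a root of $c$; therefore $\varphi(m)\le\deg c\le b-1$. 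The classical bound $\varphi(m)\gg m/\log\log m$ now gives $p_j=m\ll b\log\log b=p(b)$, which completes the argument. This final step — recognising that boundedness and integrality must be combined with \emph{rationality}, through Galois conjugates, to turn ``$\omega_j$ lies in the zero set of a degree‑$(b-1)$ polynomial'' into an actual bound on its order — is the main obstacle; the rest is pigeonhole together with elementary harmonic analysis on $\mathbf{Z}/q\mathbf{Z}$.
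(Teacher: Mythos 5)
Your proof is correct, and since the paper itself gives no proof of this lemma but imports it from Sahasrabudhe's Lemmas 10 and 11, your argument is essentially a reconstruction of that cited proof: the forward-and-backward solvable recurrence plus pigeonhole on windows gives pure periodicity with period $q\leqslant |S|^{b-1}$ on an interval containing $[u+b,v-b]$, the discrete Fourier expansion over one period places the frequencies among the roots of $c_1+c_2z+\cdots+c_bz^{b-1}$ (giving $t<b$), and Galois-equivariance of the transform of an integer-valued sequence together with $\varphi(m)\gg m/\log\log m$ bounds each $p_j$ by $O(b\log\log b)$. In particular, you correctly isolate the key point that the degree bound on $c$ alone cannot control the orders of the $\omega_j$ and that rationality of $\boldsymbol{x}$, via Galois stability of the Fourier support, is what converts it into $\varphi(p_j)\leqslant b-1$.
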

We also state the following result which follows from combining the proofs of Lemmas 3.7 and 3.8 in \cite{erdelyi}. Indeed, the reader should note that the only difference in the statement of our Lemma \ref{fewnonzerocoef} compared to Lemmas 3.7 and 3.8 in \cite{erdelyi} is that $Q(z)$ is allowed to be any monic polynomial of degree $\deg Q= 2d$ with coefficients of size $e^{O(d)}$ which satisfies \eqref{intetoL1}, rather than a specific choice of $Q$ with these properties. The proof of Lemma 3.7 can be used without any modification for such $Q$ while the proof of Lemma 3.8 holds because of the assumption \eqref{intetoL1} below.
\begin{lemma}[\cite{erdelyi}, Lemmas 3.7 and 3.8]
Let $g(t)=\frac{1}{2}G(e^{it})e^{-iNt}$ be a cosine polynomial with coefficients in a finite set $S\subset\mathbf{Z}$ and $d$ zeros in $(0,\pi)$. Then there exists an integer $d'=e^{O(d\log\log d)}$ such that the following holds. Let $Q(z)$ be any monic polynomial of degree $\deg Q= 2d$ with coefficients of size $e^{O(d)}$ and define the polynomial $F(z)\vcentcolon=G(z)Q(z)(z^{d'}-1)^2$. If $Q$ satisfies
\begin{align}
    \left|\int_0^{2\pi}g(t)(e^{itd'}-1)^2e^{-itd'}Q(e^{it})e^{-itd}\,dt\right|\geqslant \frac{1}{2}\int_0^{2\pi}\left|g(t)(e^{itd'}-1)^2Q(e^{it})\right|\,dt,
\label{intetoL1}
\end{align}
then $F(z)$ has at most $q\leqslant \exp((2M(S))^{O(d\log d)})$ non-zero coefficients.
\label{fewnonzerocoef}
\end{lemma}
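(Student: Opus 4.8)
The plan is to follow Erd\'elyi's proofs of Lemmas 3.7 and 3.8 in \cite{erdelyi}; the point is that those arguments use $Q$ only through its degree, the size of its coefficients, and the hypothesis \eqref{intetoL1}, so that nothing changes when $Q$ is an arbitrary polynomial satisfying the stated conditions rather than the specific one constructed there. I would organise the argument in three steps.

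First, I would reformulate \eqref{intetoL1} as a statement about a single coefficient of $F$. From $2g(t)=G(e^{it})e^{-iNt}$ one gets the identity $g(t)(e^{itd'}-1)^2Q(e^{it})=\tfrac12F(e^{it})e^{-iNt}$, so that the right-hand side of \eqref{intetoL1} equals $\tfrac14\int_0^{2\pi}|F(e^{it})|\,dt$, while its left-hand side equals $\pi$ times the absolute value of the coefficient of $z^{N+d+d'}$ in $F$. Since $\deg F=2N+2d+2d'$, this is the middle coefficient, and \eqref{intetoL1} says exactly that $F$ has a coefficient of absolute value at least a fixed positive multiple of $\int_0^{2\pi}|F(e^{it})|\,dt$.

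Second, I would bound the coefficients of $F$ and then count the nonzero ones using the Littlewood $L^1$ conjecture. The coefficients of $G$ lie in $S\cup2S$ and are thus integers of absolute value at most $2M(S)$; the coefficients of $Q$ are integers of size $e^{O(d)}$ (we take $Q$ to have integer coefficients, as in \cite{erdelyi}, so that $F$ does too); and $(z^{d'}-1)^2$ has the integer coefficients $1,-2,1$. Hence each coefficient of $F$ is an integer which, as a sum of at most $3(2d+1)$ of the relevant products, has absolute value at most $B:=(2M(S))^{O(d\log d)}$, a deliberately crude bound. Now suppose $F$ has $q$ nonzero coefficients. By the first step, $\int_0^{2\pi}|F(e^{it})|\,dt\ll B$; on the other hand, applying Theorem \ref{Litllconj} to the trigonometric polynomial $F(e^{it})$ and using that each of its $q$ nonzero coefficients is an integer, hence of absolute value at least $1$, gives $\int_0^{2\pi}|F(e^{it})|\,dt\gg\sum_{j=1}^q\tfrac1j\gg\log q$. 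Comparing these two bounds yields $\log q\ll B$, that is, $q\leqslant\exp\bigl((2M(S))^{O(d\log d)}\bigr)$.

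Third, it remains to produce the integer $d'=e^{O(d\log\log d)}$; this is exactly the content of (the proof of) Lemma 3.7 in \cite{erdelyi}, which constructs such a $d'$ from the zeros of $g$ in a way that does not involve $Q$. I do not expect a genuinely new difficulty: the substantive number-theoretic input is that cited lemma, and the rest of the argument is essentially bookkeeping. The one place that has to be handled with care is the reformulation in the first step, where the unimodular factors $e^{-itN}$, $e^{-itd'}$, $e^{-itd}$ must be tracked exactly so that \eqref{intetoL1} becomes precisely the statement that the middle coefficient of $F$ dominates $\int_0^{2\pi}|F(e^{it})|\,dt$; it is this exact form, and not some weaker comparison, that lets Theorem \ref{Litllconj} do the work.
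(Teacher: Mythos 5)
Your first step is correct and is indeed how \eqref{intetoL1} is meant to be used: tracking the unimodular factors, the left side of \eqref{intetoL1} is $\pi$ times the modulus of the middle coefficient of $F$ and the right side is $\tfrac14\int_0^{2\pi}|F(e^{it})|\,dt$, so \eqref{intetoL1} plus the crude coefficient bound gives $\int_0^{2\pi}|F(e^{it})|\,dt\ll M(S)e^{O(d)}$. The genuine gap is in your second step, in the parenthetical ``we take $Q$ to have integer coefficients''. The lemma has to hold for an \emph{arbitrary} monic polynomial $Q$ of degree $2d$ with real coefficients of size $e^{O(d)}$ satisfying \eqref{intetoL1}; that generality is the whole point of the paper's restatement, since it is applied (Corollary \ref{fewnonzerocoefcor}) to the perturbed companion polynomial $Q_\varepsilon(e^{it})e^{-itd}=2^d\prod_j(\cos t-\cos(t_j+\varepsilon))$, whose coefficients are symmetric functions of the $\cos(t_j+\varepsilon)$ and are not integers (indeed $\varepsilon$ is chosen so that $t_j+\varepsilon\notin\pi\cdot\mathbf{Q}$, and Erd\'elyi's companion polynomial likewise has real coefficients determined by the zeros of $g$). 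Without integrality, the nonzero coefficients of $F=G\,Q\,(z^{d'}-1)^2$ have no lower bound on their modulus, so Theorem \ref{Litllconj} only bounds the number of coefficients of modulus at least a threshold $\delta$, not the number of nonzero coefficients, and your comparison $\log q\ll B$ collapses.

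This is not a removable technicality: handling small-but-nonzero coefficients is precisely the difficulty that the cited Lemmas 3.7 and 3.8 of \cite{erdelyi} (following Sahasrabudhe) are built to overcome, and it explains the features of the statement your sketch leaves unexplained --- why the factor $(z^{d'}-1)^2$ is present at all, why $d'$ has the particular size $e^{O(d\log\log d)}$, and why the bound is $\exp((2M(S))^{O(d\log d)})$ rather than the stronger $\exp((2M(S))^{O(d)})$ that an integer-coefficient argument would already yield. Roughly, the genuine argument uses that the coefficients of $GQ$ are dot products of the fixed coefficient vector of $Q$ with length-$O(d)$ windows of the $S$-valued coefficient sequence of $G$, hence take at most $(2M(S)+1)^{O(d)}$ values; a pigeonhole over scales produces a threshold $\delta\geqslant(2M(S))^{-O(d\log d)}$ separating ``large'' from ``small'' values, Theorem \ref{Litllconj} bounds the number of large coefficients by $\exp(O(\lVert F\rVert_1/\delta))$, and the choice of $d'$ (essentially a common multiple of all admissible small periods, whence the size $e^{O(d\log\log d)}$) guarantees that on stretches where all coefficients of $GQ$ are small, multiplication by $(z^{d'}-1)^2$ annihilates them exactly. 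So your step 3 understates what is being borrowed: the paper's own justification is exactly the (legitimate) observation that Erd\'elyi's proofs use $Q$ only through its degree, its coefficient size and the hypothesis \eqref{intetoL1}; your reconstruction instead replaces the hard part of those proofs by an integrality hypothesis that neither the statement nor the intended application grants.
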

In Erd\'elyi and Sahasrabudhe's papers $Q$ is chosen to be the ‘companion' polynomial of $g$ which, if the sign changes of $g$ in $(0,\pi)$ occur at $t_1<t_2<\dots<t_d$, is defined by $Q_0(e^{it})e^{-itd}\vcentcolon=2^d\prod_{j=1}^d(\cos t-\cos t_j)$ and is the unique monic degree $d$ cosine polynomial with sign changes at all $t_j$. It is clear that for this choice of $Q_0$ the integrand in \eqref{intetoL1} is a real-valued cosine polynomial with no sign changes so that \eqref{intetoL1} holds even when the constant $1/2$ on the right hand side is replaced by $1$. For technical reasons, we use the perturbation
\begin{align}
    Q_\varepsilon(e^{it})e^{-itd}\vcentcolon=2^d\prod_{j=1}^d(\cos t-\cos (t_j+\varepsilon)),
\end{align} where $\varepsilon$ is a constant to be chosen so that \eqref{intetoL1} holds and such that none of the complex roots of $Q_\varepsilon(z)$ are roots of unity. The first property holds whenever $\varepsilon$ is sufficiently small because $Q_0$ being the companion polynomial of $g$ implies that
\begin{equation*}
    \left|\int_0^{2\pi}g(t)(e^{itd'}-1)^2e^{-itd'}Q_\varepsilon(e^{it})e^{-itd}\,dt\right|\geqslant \left\lVert g(t)(e^{itd'}-1)^2e^{-itd'}Q_\varepsilon(e^{it})e^{-itd}\right\rVert_1
\end{equation*} for $\varepsilon=0$, and because both sides of this inequality depend continuously on $\varepsilon$. Clearly we can choose arbitrarily small such $\varepsilon$ such that $\varepsilon+t_j\notin \pi\cdot\mathbf{Q}$ for all $j$ and hence no roots of $Q_\varepsilon$ are roots of unity. As $Q_\varepsilon$ has degree $2d$ and coefficients of size $e^{O(d)}$, an application of Lemma \ref{fewnonzerocoef} yields the following corollary.
\begin{cor}
    Let $g(t)=\frac{1}{2}G(e^{it})e^{-iNt}$ be a cosine polynomial with coefficients in a finite set $S\subset\mathbf{Z}$ and $d$ zeros in $(0,\pi)$. Then there exists an integer $d'=e^{O(d\log\log d)}$ and a monic polynomial $Q(z)$ of degree $2d$ none of whose complex roots are roots of unity such that the following holds. The polynomial $F(z)\vcentcolon=G(z)Q(z)(z^{d'}-1)^2$
has at most $q\leqslant \exp((2M(S))^{O(d\log d)})$ non-zero coefficients.
\label{fewnonzerocoefcor}
\end{cor}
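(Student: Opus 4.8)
The plan is to apply Lemma \ref{fewnonzerocoef} with $Q$ taken to be the perturbed companion polynomial $Q_\varepsilon$ constructed above, for a suitably small $\varepsilon$, after checking the hypotheses of that lemma together with the extra requirement that no root of $Q_\varepsilon$ be a root of unity. Substituting $\cos t=(z+z^{-1})/2$ in the defining identity $Q_\varepsilon(e^{it})e^{-itd}=2^d\prod_j(\cos t-\cos(t_j+\varepsilon))$ one sees that
\[
Q_\varepsilon(z)=\prod_{j=1}^{d}\bigl(z^2-2z\cos(t_j+\varepsilon)+1\bigr),
\]
a product of $d$ monic quadratics whose coefficients lie in $[-2,2]$. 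Hence $Q_\varepsilon$ is automatically monic of degree exactly $2d$, its coefficients have size $e^{O(d)}$, and its $2d$ roots are precisely the numbers $e^{\pm i(t_j+\varepsilon)}$, $j=1,\dots,d$.

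It remains to choose $\varepsilon$. First apply Lemma \ref{fewnonzerocoef} to $g$ to obtain the integer $d'=e^{O(d\log\log d)}$; we then need $Q_\varepsilon$ to satisfy \eqref{intetoL1} for this $d'$, and I would argue by continuity from the case $\varepsilon=0$. When $\varepsilon=0$ the polynomial $Q_0$ is the genuine companion polynomial of $g$, so $Q_0(e^{it})e^{-itd}=2^d\prod_j(\cos t-\cos t_j)$ is a real cosine polynomial whose sign changes in $(0,\pi)$ are exactly the points $t_1<\dots<t_d$; consequently $g(t)\,Q_0(e^{it})e^{-itd}$ has constant sign there, and since $(e^{itd'}-1)^2e^{-itd'}=2\cos(td')-2\leqslant 0$ the whole integrand of the left-hand side of \eqref{intetoL1} is real and of one sign on $(0,\pi)$, hence, being even and $2\pi$-periodic, on all of $[0,2\pi]$. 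Thus at $\varepsilon=0$ the left-hand side of \eqref{intetoL1} equals $\int_0^{2\pi}|g(t)(e^{itd'}-1)^2Q_0(e^{it})|\,dt$, which is strictly positive since $g$, $Q_0$ and $(z^{d'}-1)^2$ are not identically zero; both sides of \eqref{intetoL1} vary continuously with $\varepsilon$, so the inequality with the constant $\tfrac12$ survives for all $\varepsilon$ in some neighbourhood of $0$. Secondly, for the root-of-unity condition, note that $e^{\pm i(t_j+\varepsilon)}$ is a root of unity precisely when $t_j+\varepsilon\in\pi\cdot\mathbf{Q}$; since this imposes only a countable set of constraints, all sufficiently small $\varepsilon$ outside a countable exceptional set satisfy both requirements simultaneously.

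Fixing such an $\varepsilon$ and setting $Q=Q_\varepsilon$, the hypotheses of Lemma \ref{fewnonzerocoef} are all met, so its conclusion gives that $F(z)=G(z)Q(z)(z^{d'}-1)^2$ has at most $\exp((2M(S))^{O(d\log d)})$ non-zero coefficients; together with the fact that $Q_\varepsilon$ has no roots of unity among its roots, this is exactly the corollary. I expect the only delicate point to be the selection of $\varepsilon$: it must be small enough that the continuity argument delivering \eqref{intetoL1} goes through for the particular $d'$ produced by Lemma \ref{fewnonzerocoef}, while simultaneously avoiding the countably many values for which a root of $Q_\varepsilon$ becomes a root of unity. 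The compatibility of these two demands — small perturbation versus arithmetic genericity — is precisely what turns Lemma \ref{fewnonzerocoef} into the strengthened form needed in the later sections.
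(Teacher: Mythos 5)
Your proposal is correct and follows essentially the same route as the paper: take the perturbed companion polynomial $Q_\varepsilon$, verify it is monic of degree $2d$ with coefficients of size $e^{O(d)}$, use the sign/constant-sign argument at $\varepsilon=0$ plus continuity in $\varepsilon$ to secure \eqref{intetoL1} for the fixed $d'$, and choose $\varepsilon$ small with $t_j+\varepsilon\notin\pi\cdot\mathbf{Q}$ so that no root of $Q_\varepsilon$ is a root of unity, then apply Lemma \ref{fewnonzerocoef}. The extra details you supply (the explicit factorization into quadratics and the strict positivity of the $L^1$ norm at $\varepsilon=0$) only make the paper's argument more explicit.
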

In the final step of this section, we make a modest improvement over Erd\'elyi and Sahasrabudhe's structural result for trigonometric polynomials with ‘few' zeros by showing that Corollary \ref{fewnonzerocoefcor} implies that we can partition $[N]=\cup_{j=1}^K I_j$ into $K=\exp((2M(S))^{O(d\log d)})$ intervals such that on each interval $I_j$, the coefficient sequence $(a_n)_{n\in I_j}$ is periodic with period $P=e^{O(d\log\log d)}$. This removes one exponential from Erd\'elyi's bound for the period $P$ which in his paper (see Lemma 3.9 in \cite{erdelyi}) is essentially a double exponential in $d$.
\begin{proposition}
Let $S\subset \mathbf{Z}$ be a finite set and $M(S)=\max_{s\in S} |s|$. Let $g(t) = \sum_{n=0}^N a_n \cos nt$ be a cosine polynomial with coefficients $a_n\in S$. Suppose that $\deg g= N$ and that $g$ has $d$ zeros in $(0,\pi)$. Then there exists an integer $P=e^{O(d\log\log d)}$ and a partition $[0,N]=\cup_{j=1}^KI_j$ of $[0,N]$ into $K=\exp((2M(S))^{O(d\log d)})$ intervals such that on each interval $I_j$, the coefficient sequence $(a_n)_{n\in I_j}$ is periodic with period $P$. 
\label{smallperiodprop}
\end{proposition}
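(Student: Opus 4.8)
The plan is to combine Corollary \ref{fewnonzerocoefcor} with the combinatorial Lemma \ref{rootsofunitylemma}. First I would write $2g(t)=G(e^{it})e^{-iNt}$ with $G(z)=\sum_{n=0}^{2N}G_nz^n$ of degree $2N$; apart from the single entry $G_N=2a_0$, the coefficient vector $(G_n)_{n=0}^{2N}$ is the reflection $(a_N,\dots,a_1,2a_0,a_1,\dots,a_N)$ of $(a_n)$, with all entries in the finite set $S'\vcentcolon=S\cup 2S$, so $M(S')=2M(S)$. Applying Corollary \ref{fewnonzerocoefcor} yields an integer $d'=e^{O(d\log\log d)}$ and a monic polynomial $Q$ of degree $2d$ none of whose roots is a root of unity, such that $F(z)\vcentcolon=G(z)R(z)$, where $R(z)\vcentcolon=Q(z)(z^{d'}-1)^2$, has at most $q=\exp((2M(S))^{O(d\log d)})$ non-zero coefficients. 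Here $R$ is monic of degree $D\vcentcolon=2d+2d'=e^{O(d\log\log d)}$, and I would set $b\vcentcolon=D+1$.

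The next step is to extract periodicity of $(G_n)$ from the sparsity of $F$. Writing $R(z)=\sum_{i=0}^Dr_iz^i$ and $F(z)=\sum_mf_mz^m$, one has $f_m=\sum_{i=0}^Dr_iG_{m-i}$, so whenever $f_m=0$ the length-$b$ window $(G_{m-D},\dots,G_m)$ is orthogonal to the fixed non-zero vector $\mathbf{c}\vcentcolon=(r_D,r_{D-1},\dots,r_0)$ (note $r_D=1$). The at most $q$ non-zero coefficients of $F$ split its support range $[0,2N+D]$ into $O(q)$ maximal runs of consecutive zeros, and on each run $[\alpha,\beta]$ every $b$-window of $(G_n)$ with right endpoint in $[\alpha,\beta]$ is orthogonal to $\mathbf{c}$. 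For a run with $\beta-\alpha$ exceeding the threshold $|S'|^b+3b$ of Lemma \ref{rootsofunitylemma}, I would apply that lemma to $(G_n)$ with this value of $b$: discarding $O(D)$ indices from each end of the run, it produces a decomposition $G_r=\sum_{j=1}^t\alpha_j\omega_j^r$ with $t<b$, valid throughout the shrunk interval, where each $\omega_j$ is a root of unity that is also a root of $c_1+c_2z+\dots+c_bz^{b-1}$.

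The crucial observation is that this last polynomial equals the reversal $z^DR(1/z)=Q^*(z)(1-z^{d'})^2$, where $Q^*(z)=z^{2d}Q(1/z)$. The roots of $Q^*$ are the reciprocals of the roots of $Q$, so since no root of $Q$ is a root of unity, neither is any root of $Q^*$; hence the only roots of unity among the roots of $z^DR(1/z)$ are the $d'$-th roots of unity contributed by $(1-z^{d'})^2$. Therefore $\omega_j^{d'}=1$ for every $j$, so each sequence $r\mapsto\alpha_j\omega_j^r$, and thus $(G_r)$ on the shrunk interval, is periodic with period $d'$. This is precisely where the argument improves on \cite[Lemma 3.9]{erdelyi}: exploiting that the $\omega_j$ are roots of $c_1+\dots+c_bz^{b-1}$, rather than only the bound $p_j\le p(b)$ from Lemma \ref{rootsofunitylemma} (which would force taking a lowest common multiple of the individual periods), produces a single period $P\vcentcolon=d'=e^{O(d\log\log d)}$ in place of one doubly exponential in $d$.

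Finally I would assemble the partition and transport it to $(a_n)$. Cover the index range $[0,2N]$ of $(G_n)$ by the $d'$-periodic intervals obtained above (one per sufficiently long run), together with singleton intervals for the $O(q)$ positions of non-zero coefficients of $F$, for the $O(D)$ indices trimmed from the ends of each long run, for every index lying in a short run (of length at most $|S'|^b+3b$), and for the $O(D)$ boundary indices of $[0,2N]$ where a window would leave the range; a routine estimate shows that the factor $q$ dominates, so the total number of intervals is $\exp((2M(S))^{O(d\log d)})$, and $(G_n)$ is periodic with period $P$ on each of them. Restricting this partition of $[0,2N]$ to $[N+1,2N]$, translating by $-N$, adjoining the singleton $\{0\}$, and using $G_n=a_{n-N}$ for $n\in[N+1,2N]$, transports it to a partition of $[0,N]$ with the same bound on the number of intervals, on each of which $(a_n)$ is periodic with period $P=e^{O(d\log\log d)}$. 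I expect the main obstacle to be the bookkeeping of this last step, together with the index arithmetic relating $F$, the windows of $(G_n)$, and $(a_n)$; the conceptual core is the two observations above, that the vanishing of $f_m$ encodes a window-orthogonality relation and that the only root-of-unity roots of the reversal of $R$ are the $d'$-th roots of unity.
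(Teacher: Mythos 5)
Your proposal is correct and takes essentially the same route as the paper: Corollary \ref{fewnonzerocoefcor} produces the sparse product $F=G(z)Q(z)(z^{d'}-1)^2$, vanishing coefficients are read as window-orthogonality relations, Lemma \ref{rootsofunitylemma} is applied on the long zero-runs, and the decisive observation that the only root-of-unity roots of the reversed multiplier $Q^*(z)(1-z^{d'})^2$ are $d'$-th roots of unity gives the single period $P=d'=e^{O(d\log\log d)}$, exactly as in the paper's proof. The remaining differences are pure bookkeeping (you work over $[0,2N]$ and restrict to $[N+1,2N]$ with a singleton $\{0\}$ for the middle coefficient $2a_0$, while the paper partitions $[N,2N]$ directly into ``short'' and ``long'' intervals), and your interval count matches the claimed $K=\exp((2M(S))^{O(d\log d)})$.
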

Before providing the details, we give a quick overview of the ideas that go into the proof. Suppose that we have a trigonometric polynomial $G$ with coefficients in $S$ and at most $d$ zeros. Corollary \ref{fewnonzerocoefcor} shows that one can find a polynomial $\tilde{G}$ whose degree and coefficients are bounded by constants that only depend on $d$ and $S$, and which is strongly correlated with $G$ in the following sense. The product $G\tilde{G}$ is a polynomial with a bounded (also only depending on $d,S$) number of terms. Whenever a coefficient of $G\tilde{G}$ is zero, one can naturally interpret this as saying that the coefficient sequence of $G$ satisfies a recurrence relation provided by the coefficients of $\tilde{G}$. Since all but a bounded number of coefficients of $G\tilde{G}$ are non-zero, one can split the coefficient sequence $(a_n)$ of $G$ into a bounded number of pieces, such that on each piece it satisfies a recurrence of length $\deg \tilde{G}=O_{d,S}(1)$. Since the $a_n$ lie in a finite set $S$, a combinatorial result such as Lemma \ref{rootsofunitylemma} can be used to deduce that $(a_n)$ is in fact periodic of period $O_{d,S}(1)$ on each such piece.
\begin{proof}
    We may write $2g(t)=G(e^{it})e^{-iNt}$ for a polynomial $G(z)=\sum_{m=0}^{2N}b_mz^m$ with
    \begin{equation}
        b_m=a_{m-N}, \quad m\in[N+1,2N].
    \label{Gdefi}
    \end{equation} By Corollary \ref{fewnonzerocoefcor}, there exists another polynomial 
    \begin{equation}
    \tilde{G}(z)=Q(z)(z^{d'}-1)^2
    \label{Gtildedefi}
    \end{equation}
    such that $F(z)\vcentcolon=G(z)\tilde{G}(z)$ has 
    \begin{equation}
    q\leqslant\exp((2M(S))^{O(d\log d)})
    \label{qboundinprop}
    \end{equation} non-zero coefficients. We also note from Corollary \ref{fewnonzerocoefcor} that we may assume that the polynomial $Q$ has degree $\deg Q=O(d)$, that none of the complex roots of $Q$ are roots of unity and that $d'$ is a positive integer of size \begin{equation}
    d'=\exp(O(d\log \log d)).
    \label{d'bound}
    \end{equation}
    Let us write $\tilde{G}(z)=\sum_{m=0}^{D}c_{D-m}z^m$
where $D=2d'+\deg Q=\exp(O(d\log\log d))$ and let $\boldsymbol{b}=(b_m)_{m=0}^{2N}$ denote the coefficient sequence of $G$. Then $F=G\tilde{G}$ having $q$ non-zero coefficients implies that we may find intervals $J_1,J_2,\dots,J_{K_0}$ which form a partition $[N,2N]=\cup_{j=1}^{K_0}J_i$ into $K_0\leqslant q$ parts such that whenever $(b_u,b_{u+1},\dots,b_{u+D})$ is a $(D+1)$-window of $\boldsymbol{b}$ for which $[u,u+D]$ is fully contained in one of the intervals $J_i$, then 
    \begin{align}
        (b_u,b_{u+1},\dots,b_{u+D})\cdot (c_0,c_1,\dots,c_D)=0.
    \label{bperp}
    \end{align}
    To see this, simply note that if the dot product in \eqref{bperp} is non-zero then $F(z)$ has a non-zero coefficient at $z^{u+D}$. Consider first all the intervals $J_i$ whose length is at most $(2M(S))^{O(D)}$, which we will refer to as ‘short' intervals. For each such ‘short' interval $J_i$ we simply partition it into $|J_i|$ many intervals $I^{(i)}_1,\dots,I^{(i)}_{|J_i|}$ of length 1. The intervals $I^{(i)}_j-N$ will be parts\footnote{We need to shift these intervals by $-N$ by \eqref{Gdefi}.} of the desired partition $I_1,I_2,\dots,I_K$ of $[0,N]$ from the conclusion of Proposition \ref{smallperiodprop} and note that as they all have length 1, the required property that the coefficient sequence $(a_n)$ of $g$ is $P$-periodic on $I^{(i)}_j-N$ is vacuously satisfied. Moreover, note that the total number of intervals $I^{(i)}_j$ that are obtained from ‘short' $J_i$ is 
    \begin{align}
    O(q\max_{J_i\text{ is ‘short'}}|J_i|)=O(q(2M(S))^{O(D)})=O(\exp((2M(S))^{O(d\log\log d)}))
    \label{fewparts1}
    \end{align}
    recalling that $D=\exp(O(d\log\log d))$ by \eqref{d'bound} and $q\leqslant\exp((2M(S))^{O(d\log d)})$ by \eqref{qboundinprop}. We can now consider the ‘long' intervals $J_i$ which have length at least $(2M(S))^{O(D)}$ and, if we choose the absolute constant hidden in the $O$-notation to be sufficiently large, then the sequence $(b_m)_{m\in J_i}$ satisfies the assumptions of Lemma \ref{rootsofunitylemma} as \eqref{bperp} is also satisfied on $(b_m)_{m\in J_i}$. Let us subdivide each ‘long' interval $J_i=[x_i,y_i]$ into $I^{(i)}=[x_i+2D,y_i-2D]$ and $O(D)$ many intervals of length 1 that we denote by $I^{(i)}_j$. Then the intervals $I^{(i)}_j-N$ and $I^{(i)}-N$ coming from ‘long' intervals $J_i$ complete the desired partition of $[0,N]$ from the conclusion of Proposition \ref{smallperiodprop}. To finish, we prove the following claims:
    \begin{itemize}
        \item [(i)] The intervals $\{I^{(i)}_j-N:J_i\text{ is ‘short'}\}\cup\{I^{(i)}_j-N:J_i\text{ is ‘long'}\}\cup\{I^{(i)}-N:J_i\text{ is ‘long'}\}$ partition $[0,N]$.
        \item [(ii)] This partition has at most $\exp((2M(S))^{O(d\log\log d)})$ parts.
        \item [(iii)] There is a positive integer $P=\exp(O(d\log\log d))$ such that if $I$ is an interval in the partition, then the coefficient sequence $(a_m)_{m\in I}$ of $g$ is $P$-periodic on $I$.
    \end{itemize}
    Claim (i) is clear since the intervals $J_i$ partition $[N,2N]$ and the intervals appearing in (i) are simply subdivisions of these $J_i$. For (ii), we recall from \eqref{fewparts1} that the number of $I^{(i)}_j$ coming from ‘short' $J_i$ is $O(\exp((2M(S))^{O(d\log\log d)}))$ as desired. Similarly, each ‘long' $J_i$ was subdivided into at most $O(D)$ intervals of the form $I^{(i)}_j$ or $I^{(i)}$ so these also contribute at most $O(Dq)=\exp((2M(S))^{O(d\log\log d)})$ parts to the partition, recalling \eqref{qboundinprop} and that $D=\exp(O(d\log\log d))$. Finally, claim (iii) is vacuously true for each of the intervals $I^{(i)}_j-N$ for any ‘long' or ‘short' $J_i$ since all these intervals have length 1. To finish, it remains to show that the sequences $(a_m)_{m\in I^{(i)}-N}$ are $P$-periodic when $i$ is an index for which $J_i$ is a ‘long' interval. By our discussion above, whenever $J_i=[x_i,y_i]$ is a ‘long' interval we may apply Lemma \ref{rootsofunitylemma} to the sequence $(b_m)_{m\in J_i}$ and hence there is a $t\leqslant D$, constants $\alpha_j$ and roots of unity $\omega_j$ which are also roots of the polynomial $\sum_{m=0}^Dc_mz^m=z^D\tilde{G}(1/z)$ such that for all $m\in[x_i+2D,y_i-2D]=\vcentcolon I^{(i)}$ we have
    \begin{equation}
        b_m=\sum_{j=1}^t\alpha_j\omega_j^m.
    \label{periodicb}
    \end{equation}
    We will show that the positive integer $P\vcentcolon=d'$ has the property that any root of unity $\omega$ which is also a root of $z^D\tilde{G}(1/z)$ satisfies $\omega^P=1$, and hence $(b_m)$ is $P$-periodic on each interval $I^{(i)}$ so that $(a_m)_{m\in I^{(i)}-N}$ is also $P$-periodic by \eqref{Gdefi}. As $d'=\exp(O(d\log\log d))$ by \eqref{d'bound}, this would give the required bound for $P$. Note that by its definition \eqref{Gtildedefi} we have $z^D\tilde{G}(1/z)=(z^{d'}-1)^2Q^*(z)$ where $Q^*(z)=z^{\deg Q}Q(1/z)$ is the reciprocal polynomial of $Q$. By Corollary \ref{fewnonzerocoefcor}, none of the roots of $Q$ are roots of unity and hence neither are the roots of $Q^*$. Hence, if $\omega$ is a root of unity for which $\tilde{G}(1/\omega)=0$, then $\omega^{d'}=1$ as desired.
\end{proof}

\section{Proof of the main theorem}
\label{sec:5}
In this short section, we show how the results from Sections 3 and 4 may be combined to prove Theorem \ref{newtheorem}. 
\begin{proof}[Proof of Theorem \ref{newtheorem}]Let $S\subset \mathbf{Z}$ be a finite set. Let $g(t)=\sum_{n=0}^N a_n\cos nt$ be a cosine polynomial with coefficients $a_n\in S$ and $d$ roots in $(0,\pi)$. Then Proposition \ref{smallperiodprop} implies that $[0,N]$ can be partitioned into $K=\exp((2M(S))^{O(d\log d)})$ intervals such that $(a_m)$ is periodic with period $P$ on each interval, and $P=\exp(O(d\log\log d))$. Using these bounds for $K$ and $P$ in Proposition \ref{improvetheo} gives
\begin{align*}
     \log|g(0)| &\ll dM(S)\exp(O(d\log\log d))(2M(S))^{O(d\log d)}\\
     &\ll \exp(O(d\log d (1+\log M(S))))
\end{align*}
which simplifies to
\begin{align*}
    d\log d\gg \frac{1}{1+\log M(S)}\log\log |g(0)|.
\end{align*}
This clearly implies the bound that we claimed in Theorem \ref{newtheorem}.
\end{proof}
\bibliographystyle{plain}

\bigskip

\noindent
{\sc Mathematical Institute, Andrew Wiles Building, University of Oxford, Radcliffe
Observatory Quarter, Woodstock Road, Oxford, OX2 6GG, UK.}\newline
\href{mailto:benjamin.bedert@maths.ox.ac.uk}{\small benjamin.bedert@maths.ox.ac.uk}
\end{document}